\newtheorem{lemma}{Lemma}[section]
\newtheorem{theorem}{Theorem}[section]
\theoremstyle{remark}
\newtheorem{remark}{Remark}[section]
\numberwithin{equation}{section}
\def\Om{\Omega}
\def\om{\omega}
\def\e{\varepsilon}
\def\g{\gamma}
\def\l{\lambda}
\def\p{\partial}
\def\a{\alpha}
\def\b{\beta}
\def\d{\delta}
\def\L{\Lambda}
\def\z{\zeta}
\def\Odr{\mathcal{O}}
\def\H{W_2}
\def\Hinf{W_\infty}
\def\di{\,\mathrm{d}}
\def\I{\mathrm{I}}
\def\iu{\mathrm{i}}
\def\He{\mathcal{H}_\a^\e}
\def\he{\mathfrak{h}_\a^\e}
\def\her{\mathfrak{h}_{\a,\mathrm{r}}^\e}
\def\hei{\mathfrak{h}_{\a,\mathrm{i}}^\e}
\def\Ho{\mathcal{H}^0_\a}
\def\ho{\mathfrak{h}_0^0}
\def\bb{\mathfrak{b}^\e}
 \DeclareMathOperator{\RE}{Re}
\DeclareMathOperator{\IM}{Im} \DeclareMathOperator{\spec}{\sigma}
\DeclareMathOperator{\dist}{dist} 
\DeclareMathOperator{\Dom}{\mathcal{D}}
\begin{document}
\allowdisplaybreaks

\title{\textbf{Discrete spectrum of \\ thin $\mathcal{PT}$-symmetric waveguide}}
\author{Denis Borisov}
\date{\small
\begin{center}
\begin{quote}
{\it
 Institute of Mathematics CS USC RAS, Chernyshevsky str. 112, Ufa, 450008, Russia}
 \\
   \smallskip
{\it Bashkir State Pedagogical University, October St.~3a,
Ufa, 450000,  Russia}
\\
\smallskip
{\it E-mail:} \texttt{borisovdi@yandex.ru} \\
\smallskip{\it Homepage:} \texttt{http://borisovdi.narod.ru/}
\end{quote}
\end{center}}

\maketitle

\begin{abstract}
In a thin multidimensional layer we consider a second order differential $\mathcal{PT}$-symmetric operator. The operator is of rather general form and its coefficients are arbitrary functions depending both on slow and fast variables. The $\mathcal{PT}$-symmetry of the operator is ensured by the boundary conditions of Robin type with pure imaginary coefficient. In the work we determine the limiting operator, prove the uniform resolvent convergence of the perturbed operator to the limiting one, and derive the estimates for the rates of convergence. We establish the convergence of the  spectrum of perturbed operator to that of the limiting one. For the perturbed eigenvalues converging to the limiting discrete  ones we prove that they are real and construct their complete asymptotic expansions. We also obtain the complete asymptotic expansions for the associated eigenfunctions.
\end{abstract}

\begin{quote}
MSC: 35P05, 35B25, 35C20
\\

Keywords: $\mathcal{PT}$-symmetric operator, thin domain, uniform resolvent convergence, estimates for the rate of convergence, spectrum, asymptotic expansions
\end{quote}

\section{Introduction}

In the end of the last century a new direction in mathematical physics appeared devoted to $\mathcal{PT}$-symmetric operators. This notion indicates usually differential (or more general) operators commuting with the composition $\mathcal{PT}$, where $\mathcal{T}$ is the operator of complex conjugation,  $(\mathcal{T}u)(x)=\overline{u(x)}$, and $\mathcal{P}$ is a some operator describing symmetric transformation w.r.t. spatial variable, say, $(\mathcal{P}u)(x)=u(-x)$. Such operators are usually non-self-adjoint and the main interest is usually related with their various spectral properties. One of first pioneering works initiated a impetuous study of
$\mathcal{PT}$-symmetric operators are the papers \cite{BB}--\cite{Z4}, see also the survey \cite{B} as well as the references in the cited works.

One of the most interesting properties of $\mathcal{PT}$-symmetric operators is the fact that they can possess real spectrum that gives a chance for quantum mechanical interpretation of these operators. In particular, there was found a series of  $\mathcal{PT}$-symmetric operators with real spectra, see, for instance, \cite{Caliceti-Cannata-Graffi_2006}--\cite{Znojil_2001}. It should be stressed that the most part of studies was devoted to the case of Schr\"odinger operator with $\mathcal{PT}$-symmetric potential.

A more complicated model of  $\mathcal{PT}$-symmetric waveguide where the $\mathcal{PT}$-symmetry was originated by boundary conditions and not by the differential expression was suggested in work \cite{BK1}. Here there was considered the Laplace operator in an infinite straight strip with Robin boundary condition. The coefficient in the boundary condition was pure imaginary that ensures the required
$\mathcal{PT}$-symmetry. It was assumed that the coefficient differs to a constant just by a finite function multiplied by a small parameter. The essential spectrum of such operator was found and it happened to be a fixed real semi-axis. The phenomenon of new eigenvalues emerging from the threshold of essential spectrum was studied.

A series of numerical experiments performed in work  \cite{KT} showed that in the case when the aforementioned small parameter becomes finite and increases, in the spectrum of this model there appear also pairs of complex conjugate isolated eigenvalues that behave in a quite fanciful way. A similar model, but much more complicated, of Laplace-Beltrami operator in a strip on a two-dimensional Riemann manifold was considered in \cite{KS}. There was obtained a series of general results on the operator and its spectrum.

The described model in work \cite{BK1} was in fact an operator with a small regular perturbation that simplified essentially the studying. More complicated cases of singularly perturbed $\mathcal{PT}$-symmetric operators were considered in recent works \cite{IMM2012}, \cite{AsAn2012}. In \cite{IMM2012} a model from \cite{BK1} was again studied but the coefficient in the boundary condition was a sufficiently smooth bounded function and the perturbation was cutting out two small symmetric holes inside the strip. The limiting operator here is the same $\mathcal{PT}$-symmetric operator but without small holes.
The uniform resolvent convergence of the perturbed operator to the limiting one was proven and the estimates for the rates of convergence were established. Moreover, we studied in details the phenomenon of new eigenvalues emerging from the threshold of the essential spectrum. It was shown that here the necessary and sufficient conditions for existence and absence of such eigenvalues differ substantially from similar results for self-adjoint operators \cite{N}.

In work \cite{AsAn2012} one more extension of the model in \cite{BK1} was studied. Here the strip was replaced by a multi-dimensional layer and the singularity of the perturbation was a small width of the layer.  The main result of paper \cite{AsAn2012} is determination of the limiting operator for such model, proving the uniform resolvent convergence of the perturbed operator to the limiting one and establishing the estimates for the rates of convergence. The limiting operator happened to be self-adjoint and it allowed us to state that even if the spectrum of the perturbed operator is not real, at least, it is located near the real axis.

The present work is devoted to generalization and further developing of the results of work \cite{AsAn2012}. We again consider
$\mathcal{PT}$-symmetric operator in a thin multi-dimensional layer. But in contrast to \cite{AsAn2012}, we consider an arbitrary second order scalar operator with variable coefficients, not just the Laplacian. For the coefficients of the operator we impose rather weak smoothness conditions as well as the conditions ensuring  $\mathcal{PT}$-symmetry. Moreover, these coefficients can depend on fast (rescaled) variable in the transversal direction in the layer that in fact make these coefficients fast oscillating.
$\mathcal{PT}$-symmetry of the operator is again originated
by Robin condition with pure imaginary coefficient.

The first part of the work is devoted to determining the form of the limiting operator. Such operator is found, and it is essentially more complicated in comparison with  \cite{AsAn2012}. It is related to the presence of all coefficients in the perturbed operator and rather nontrivial formulae for the coefficients of limiting operator. Here our main result is the proof of the uniform resolvent convergence of the perturbed operator to the limiting one and establishing the estimates for the rate of convergence. It is shown that the order of these estimates is the best possible, while in
\cite{AsAn2012} such result was absent.

In the second part of the work we consider the asymptotic behavior of the spectrum of the perturbed operator. We first prove the convergence of the spectrum of perturbed operator to that of the limiting operator. It should be stressed that here we can not employ the classical theorems on spectrum convergence since the perturbed operator is not self-adjoint. Instead of this we propose an approach based on a non-self-adjoint version of Birman-Schwinger principle suggested  in \cite{G}, \cite{MSb} and we combine it the proven uniform resolvent convergence.

Then we study the behavior of perturbed eigenvalues converging to isolated limiting eigenvalues. Here we succeeded to find a simple but original trick and to show that all such perturbed eigenvalues are real no matter what the multiplicity is  (\ref{5.19}), (\ref{5.21}), (\ref{5.20}). We note that similar results on reality of considered eigenvalues in \cite{BK1}, \cite{IMM2012} were based on their simplicity.

We also construct the complete asymptotic expansions of aforementioned eigenvalues and associated eigenfunctions. The asymptotics are constructed first formally on the basis of multiscale method \cite{MS} and then they are justified. And if the formal construction does not really differ from similar constructions for self-adjoint operators in thin domains (see, for instance, \cite{NazB}--\cite{ESAIM}, as well as \cite{CCP}, \cite{PP}), we failed to apply the standard justification technique from the self-adjoint case \cite{NazB}, \cite{Izv2003}. Here we have to develop certain functional approach for the justification. It should be also stressed that no results on asymptotic behavior of the spectrum were obtained in  \cite{AsAn2012}.

In conclusion let us describe the structure of the paper. In the next section we formulate the problem and state the main results. In the third section we prove general qualitative properties of the perturbed operator. The forth section is devoted to proving the uniform resolvent convergence and obtaining the estimates for the rate of convergence. In the fifth section we prove the convergence of the spectrum. In the sixth section we construct formally the asymptotic expansions for the eigenvalues and the eigenfunctions of the perturbed operator, while in the seventh section they are rigourously justified.

\section{Statement of problem and main results}

Let $x=(x',x_n)$ be Cartesian coordinates in $\mathds{R}^n$, $n\geqslant 2$, $\Om^\e:=\{x: -\e/2<x_n<\e/2\}$ be a thin multi-dimensional layer in $\mathds{R}^n$, $\e$ be a small positive parameter and $\e\leqslant \e_0$, where $\e_0$ is a small fixed number. We denote
\begin{equation*}
\Om:= \{(x',\xi):\ x'\in \mathds{R}^{n-1},\ \xi\in(-1/2,1/2)\},\quad \Pi:=\Om\times(0,\e_0).
\end{equation*}
In $\Pi$ we define functions $A_{ij}=A_{ij}(x',\xi,\e)$, $A_j=A_j(x',\xi,\e)$, $A_0=A_0(x',\xi,\e)$ satisfying the conditions
\begin{gather}
A_{ij}(\cdot,\cdot,\e), A_j(\cdot,\cdot,\e) \in C^1(\overline{\Om}),
\quad
A_0(\cdot,\cdot,\e)\in C(\overline{\Om}),
\label{1.1}
\\
A_{ji}=A_{ij}, \quad \sum\limits_{i,j=1}^{n} A_{ij}(x',\xi,\e)\z_i\z_j\geqslant c_0|\z|^2,\quad \z\in\mathds{R}^n,\quad (x',\xi,\e)\in\overline{\Pi},
\label{1.2}
\end{gather}
where $c_0$ is a positive constant independent of $x'$, $\xi$, $\e$, and $\z$. Functions $A_{ij}$ are assumed to be real-valued, while functions $A_j$, $A_0$ are complex-valued and
\begin{equation*}
A_{ij}, \ \nabla_{x,\xi} A_{ij}, \ A_j, \ \nabla_{x,\xi} A_j,\ A_0 \in L_\infty(\Pi).
\end{equation*}
Moreover, we suppose the following symmetry conditions
\begin{equation}\label{1.3}
\begin{aligned}
&A_{ij}(x',-\xi,\e)=A_{ij}(x',\xi,\e),  &&  A_{in}(x',-\xi,\e)=-A_{in}(x',\xi,\e), && i,j=1,\ldots,n-1,
\\
&A_{nn}(x',-\xi,\e)=A_{nn}(x',\xi,\e), && \overline{A}_j(x',-\xi,\e)=A_j(x',\xi,\e), && j=1,\ldots,n-1,
\\
&A_n(x',-\xi,\e)=-\overline{A}_n(x',\xi,\e), &&
A_0(x',-\xi,\e)=\overline{A}_0(x',\xi,\e).
\end{aligned}
\end{equation}

Let $a=a(x',\e)$ be a real function belonging to  $\Hinf^1(\mathds{R}^{n-1})$ for each ${\e\in[0,\e_0]}$.
We indicate
\begin{align*}
\eta(\e):=&\sum\limits_{i,j=1}^{n} \sup\limits_{\overline{\Om}} |A_{ij}(x',\xi,\e)-A_{ij}(x,\xi,0)|
+\sum\limits_{i,j=1}^{n} \sup\limits_{\overline{\Om}} |\nabla_{x,\xi}(A_{ij}(x',\xi,\e)-A_{ij}(x',\xi,0))|
\\
&+\sum\limits_{j=1}^{n} \sup\limits_{\overline{\Om}} |A_j(x',\xi,\e)-A_j(x',\xi,0)|
+ \sup\limits_{\mathds{R}^{n-1}} |\a(x',\e)-\a(x',0)|.
\end{align*}
In what follows functions $A_{ij}$, $A_j$, $A_0$, $\a$ are assumed to be continuous w.r.t. $\e$ at the point $\e=0$, namely,
\begin{equation}\label{1.6}
\lim\limits_{\e\to+0} \eta(\e)=0.
\end{equation}
We let
\begin{align*}
&A_{ij}^\e(x):=A_{ij}\left(x',\frac{x_n}{\e},\e\right), &&
A_j^\e(x):=A_j\left(x',\frac{x_n}{\e},\e\right),
\\
&A_0^\e(x):=A_0\left(x',\frac{x_n}{\e},\e\right), && \a^\e(x'):=\a(x',\e).
\end{align*}

The main object of the study in the present work is the operator
\begin{equation}\label{1.4}
\begin{aligned}
\He=-\sum\limits_{i,j=1}^{n} \frac{\p\hphantom{x}}{\p x_i} A_{ij}^\e \frac{\p\hphantom{x}}{\p x_j} + \sum\limits_{j=1}^{n} \left(A_j^\e\frac{\p\hphantom{x}}{\p x_j}-\frac{\p\hphantom{x}}{\p x_j} \overline{A_j^\e}\right)+ A_0^\e \quad\text{в}\quad \Om^\e
\end{aligned}
\end{equation}
subject to the boundary condition
\begin{equation}\label{1.5}
\left(\frac{\p\hphantom{\nu}}{\p\nu^\e}+\iu\a\right)u=0 \quad\text{на}\quad\p\Om^\e, \qquad \frac{\p\hphantom{\nu }}{\p\nu^\e}:=\sum\limits_{j=1}^{n} A_{nj}^\e\frac{\p}{\p x_j}+\overline{A_n^\e},
\end{equation}
where $\iu$ is the imaginary unit.

Rigourously we introduce operator $\He$ as that in $L_2(\Om^\e)$ defined by the differential expression (\ref{1.4}) on the domain
\begin{equation}\label{1.7}
\Dom(\He) = \{u\in \H^2(\Om^\e): \ \text{condition (\ref{1.5}) is satisfied}\}.
\end{equation}
In what follows we call this operator perturbed.

The main aim of the work is to study the asymptotic behavior
of the resolvent and the discrete spectrum of operator $\He$ as $\e\to+0$.

To formulate the main results we shall need additional notations. In $L_2(\Om^\e)$ we define the mappings
\begin{equation}\label{1.8}
(\mathcal{P}u)(x):=u(x',-x_n),\quad \mathcal{T}u:=\overline{u}. \end{equation}
Our first result describe the qualitative properties of operator $\He$.

\begin{theorem}\label{th2.1}
Operator $\He$ is $m$-sectorial, $\mathcal{T}$-self-adjoint and $\mathcal{P}$-pseudo-Hermitian, i.e.,
\begin{equation}\label{1.9}
(\He)^*=\mathcal{T}\He\mathcal{T},\quad (\He)^*=\mathcal{P}\He\mathcal{P},
\end{equation}
and $\mathcal{PT}$-symmetric
\begin{equation}\label{1.10}
\mathcal{PT}\He=\He\mathcal{PT}.
\end{equation}
The adjoint operator for $\He$ is given by the identity
\begin{equation}\label{1.11}
(\He)^*=\mathcal{H}_{-\a}^\e.
\end{equation}
The spectrum of operator $\He$ satisfies the inclusion
\begin{equation}\label{1.12}
\begin{gathered}
\spec(\He)\subseteq\mathds{K},
\\
\begin{aligned}
\mathds{K}:&=\left\{z\in\mathds{C}:\ |\IM\l|\leqslant
\frac{c_3}{c_0}\left(c_1+\sqrt{c_1^2+c_0(|\RE\l|+c_2)}\right)+c_2
\right\}
\\
&\subseteq\left\{z\in\mathds{C}:\ |\IM\l|\leqslant
 \frac{c_3}{\sqrt{c_0} }\sqrt{|\RE z|}+\frac{(c_1+\sqrt{c_1^2+c_0 c_2})c_3}{c_0}+c_2\right\},
\end{aligned}
\end{gathered}
\end{equation}
where
\begin{gather*}
c_1:= \left(\sum\limits_{j=1}^{n} \sup\limits_{\Pi}{}^2 |A_j(x',\xi,\e)|\right)^{1/2},
\\
c_2:=\sup\limits_{\Pi}{}^2 |A_0(x',\xi,\e)|,
\quad
c_3:=2\sup\limits_{\mathds{R}^{n-1}\times [0,\e_0]} |\a(x',\e)|.
\end{gather*}
\end{theorem}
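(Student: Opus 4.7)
The plan is to build $\He$ as the $m$-sectorial operator associated with the natural sesquilinear form on $\H^1(\Om^\e)$, and then verify the remaining claims directly from that form. I would set
\begin{equation*}
\he[u,v] := \sum_{i,j=1}^{n}(A_{ij}^\e \p_j u, \p_i v)_{L_2(\Om^\e)} + \sum_{j=1}^{n}\bigl[(A_j^\e \p_j u, v) + (u, A_j^\e \p_j v)\bigr] + (A_0^\e u, v) + \iu \int_{\p\Om^\e} \a^\e u \overline{v}\, \nu_n \di S,
\end{equation*}
which is what one obtains after integrating (\ref{1.4}) by parts against $v\in\H^1(\Om^\e)$ and inserting (\ref{1.5}). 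The ellipticity (\ref{1.2}) and Young's inequality give $\RE\he[u,u]\geqslant (c_0/2)\|\nabla u\|_{L_2(\Om^\e)}^2 - C\|u\|_{L_2(\Om^\e)}^2$, and the imaginary part admits an analogous bound, so $\he$ is densely defined, closed and sectorial on $\H^1(\Om^\e)$. Kato's first representation theorem then produces the unique $m$-sectorial operator associated with $\he$, and interior and boundary elliptic regularity for divergence-form operators with Lipschitz principal coefficients identifies it with the operator defined by (\ref{1.4})--(\ref{1.7}).

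For the adjoint identity (\ref{1.11}) I would compute the adjoint form $\mathfrak{h}^*[u,v]:=\overline{\he[v,u]}$ term by term and match it against $\mathfrak{h}_{-\a}^\e[u,v]$: the reality and symmetry of $A_{ij}$ keep the leading piece invariant, the balanced first-order pair is invariant by exchange of the two summands, and the sign flip of the imaginary boundary integral is precisely what is encoded by replacing $\a$ with $-\a$. For the two identities in (\ref{1.9}) I would carry out the substitution $\xi \to -\xi$ in the bulk (equivalently $x_n \to -x_n$) together with the accompanying top--bottom swap on $\p\Om^\e$; each clause of (\ref{1.3}) is tailored exactly to convert $\he[\mathcal{P}u,\mathcal{P}v]$ and $\overline{\he[\mathcal{T}u,\mathcal{T}v]}$ into $\mathfrak{h}_{-\a}^\e[u,v]$. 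Composing the two identities then yields the $\mathcal{PT}$-symmetry (\ref{1.10}).

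The spectral inclusion (\ref{1.12}) comes from bounding the numerical range of $\he$. A crucial observation is that since $\a^\e$ is independent of $x_n$, the divergence theorem converts the boundary integral into a bulk one,
\begin{equation*}
\int_{\p\Om^\e}\a^\e|u|^2\nu_n \di S = 2\RE\int_{\Om^\e}\a^\e\overline{u}\,\p_n u\,\di x,
\end{equation*}
so that $|\IM\he[u,u]|\leqslant c_2\|u\|^2_{L_2(\Om^\e)} + c_3\|u\|_{L_2(\Om^\e)}\|\nabla u\|_{L_2(\Om^\e)}$ with no $\e^{-1}$ loss. Combined with $\RE\he[u,u]\geqslant c_0\|\nabla u\|_{L_2(\Om^\e)}^2 - 2c_1\|\nabla u\|_{L_2(\Om^\e)}\|u\|_{L_2(\Om^\e)} - c_2\|u\|_{L_2(\Om^\e)}^2$, solving the resulting quadratic inequality for $\|\nabla u\|$ in terms of $|\RE\l|+c_2$ and substituting into the estimate for $|\IM\l|$ produces exactly the set $\mathds{K}$, including the simpler square-root majorant obtained by a sub-additive estimate of the square root. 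The main obstacle throughout is the bookkeeping in (\ref{1.9})--(\ref{1.11}): because $A_{in}$, $A_n$, $A_j$ ($j<n$) and $A_0$ each transform differently under $\xi\to -\xi$ per (\ref{1.3}), and these signs must mesh exactly with the orientation factor $\nu_n$ flipping between the two faces of $\p\Om^\e$, the symmetry verifications must be carried out term by term rather than in one unified sweep.
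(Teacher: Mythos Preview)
Your proposal is correct and follows essentially the same route as the paper: the same sesquilinear form on $\H^1(\Om^\e)$, the same use of Kato's first representation theorem together with elliptic regularity to identify the domain, the same computation of the adjoint form to obtain (\ref{1.11}) and then (\ref{1.9})--(\ref{1.10}) via the parity relations (\ref{1.3}), and the same derivation of the numerical-range inclusion---including the key step of rewriting the boundary integral as $\int_{\Om^\e}\a^\e\,\p_{x_n}|u|^2\,\di x$ to avoid an $\e^{-1}$ loss, followed by solving the quadratic in $\|\nabla u\|$.
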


To describe the asymptotic behavior of the resolvent for operator $\He$, we introduce the limiting operator. Let
\begin{equation}\label{1.13}
\begin{aligned}
&A_{ij}^0(x'):=\int\limits_{-1/2}^{1/2} \left(A_{ij}(x',\xi,0)- \frac{A_{in}(x',\xi,0)A_{nj}(x',\xi,0)}{A_{nn}(x',\xi,0)}
\right)\di\xi,
\\
&A_j^0(x'):=\int\limits_{-1/2}^{1/2} \left(A_j(x',\xi,0)- \frac{A_n(x',\xi,0)A_{nj}(x',\xi,0)}{A_{nn}(x',\xi,0)}\right)\di\xi,
\\
&A_0^0(x'):=\int\limits_{-1/2}^{1/2} \left(A_0(x',\xi,0)+ \frac{\a^2(x',0)}{A_{nn}(x',\xi,0)}\right.
\\
&\hphantom{A_0^0(x'):=\int\limits_{-1/2}^{1/2}9}\left.  -2\iu \a(x',0) \frac{\RE A_n(x',\xi,0)}{A_{nn}(x',\xi,0)}-\frac{|A_n(x',\xi,0)|^2}{A_{nn}(x',\xi,0)}
\right)\di\xi,
\end{aligned}
\end{equation}
where $i,j=1,\ldots,n-1$.
In $L_2(\mathds{R}^{n-1})$ we define the operator
\begin{equation}\label{1.14}
\Ho:=-\sum\limits_{i,j=1}^{n-1}\frac{\p\hphantom{x}}{\p x_i} A_{ij}^0 \frac{\p\hphantom{x}}{\p x_j}  + \sum\limits_{j=1}^{n-1} \left( A_j^0 \frac{\p\hphantom{x}}{\p x_j} - \frac{\p\hphantom{x}}{\p x_j} \overline{A_j^0}\right) + A_0^0
\end{equation}
on domain $\H^2(\mathds{R}^{n-1})$. By $\mathcal{Q}^\e$ we denote the projector in $L_2(\Om^\e)$
\begin{equation}\label{1.15}
(\mathcal{Q}^\e f)(x'):=\e^{-1}\int\limits_{-\e/2}^{\e/2} f(x)\di x_n
\end{equation}
and we let
\begin{equation}\label{1.16}
L^\e:=\mathcal{Q}^\e L_2(\Om^\e),\quad L_\bot^\e:=\mathcal{Q}_\bot^\e L_2(\Om_\e),
\quad \mathcal{Q}_\bot^\e:=\I-\mathcal{Q}^\e.
\end{equation}
Space $L_2(\Om^\e)$ can be represented as the direct sum
\begin{equation}\label{1.17}
L_2(\Om^\e)=L^\e\oplus L_\bot^\e.
\end{equation}
In the sense of this expansion, the operator $\e^{-1/2} (\mathcal{H}_\a^0-\l)^{-1}\mathcal{Q}^\e$ acting in $L^\e$ for appropriate $\l\in\mathds{C}$ can be extended to the operator  $\e^{-1/2} (\mathcal{H}_\a^0-\l)^{-1}\mathcal{Q}^\e\oplus 0$ acting in $L_2(\Om^\e)$.

Let us formulate the main result on asymptotic behavior of the resolvent for operator $\mathcal{H}_\a^\e$.

\begin{theorem}\label{th2.2}
Operator $\mathcal{H}_\a^0$ is self-adjoint. For each $\l\in \mathds{C}\setminus(\mathds{K}\cup\spec(\Ho))$ and sufficiently small $\e$ the operators  $(\He-\l)^{-1}$ and $(\mathcal{H}_\a^0-\l)^{-1}$ are well-defined and bounded. For each $f\in L_2(\Om^\e)$ the uniform in $\e$ and $f$ estimates
\begin{equation}
\big\|(\mathcal{H}_\a^\e-\l)^{-1}f-\big(  (\mathcal{H}_\a^0-\l)^{-1}\mathcal{Q}^\e\oplus 0\big)f
\big\|_{L_2(\Om^\e)}\leqslant (\e+\eta(\e)) C(\l) \|f\|_{L_2(\Om^\e)}
\label{1.18}
\end{equation}
and
\begin{equation}\label{1.19}
\begin{aligned}
&\big\|(\mathcal{H}_\a^\e-\l)^{-1}f-\big( (\mathcal{H}_\a^0-\l)^{-1}\mathcal{Q}^\e\oplus 0\big)f
\\
&\hphantom{\big\|(\mathcal{H}_\a^\e-\l)^{-1}f}-\e \mathcal{W}^\e (\mathcal{H}_\a^0-\l)^{-1}\mathcal{Q}^\e f
\big\|_{\H^1(\Om^\e)}\leqslant  (\e+\eta(\e)) C(\l) \|f\|_{L_2(\Om^\e)}
\end{aligned}
\end{equation}
hold true, where  $C(\l)$ are some constants independent of $\e$, $f$ but dependent of $\l$, and the operator $\mathcal{W}^\e: \H^2(\mathds{R}^{n-1})\to \H^1(\Om^\e)$ is determined as
\begin{align*}
(\mathcal{W}^\e u)(x,\e):=&
-\sum\limits_{i,j=1}^{n-1} \frac{\p u}{\p x_j}(x') \int\limits_{0}^{\frac{x_n}{\e}} \frac{A_{nj}(x',t,\e)}{A_{nn}(x',t,\e)}\di t - u(x') \int\limits_{0}^{\frac{x_n}{\e}} \overline{A}_n(x',t,\e)\di t
\\
& -\iu \a^\e(x') u(x') \int\limits_{0}^{\frac{x_n}{\e}} \frac{\di t}{A_{nn}(x',t,\e)}.
\end{align*}
\end{theorem}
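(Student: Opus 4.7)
My plan is first to reduce $\Ho$ to a standard elliptic self-adjoint operator using the symmetries (\ref{1.3}), then to construct a two-scale approximation $v^\e = u_0 + \e\mathcal{W}^\e u_0$, and finally to invert the perturbed operator on the residual.

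For self-adjointness of $\Ho$, the relations (\ref{1.3}) imply that in each integrand of (\ref{1.13}) the odd-in-$\xi$ parts integrate to zero and the imaginary parts cancel: for instance $A_{in}A_{nj}/A_{nn}$ is even in $\xi$ since $A_{in}, A_{nj}$ are odd and $A_{nn}$ is even, while the term $\iu\a\RE A_n/A_{nn}$ in $A_0^0$ integrates to zero because $\RE A_n$ is odd in $\xi$. Hence $A_{ij}^0, A_j^0, A_0^0$ are real. Applying the Schur-complement identity pointwise in $\xi$ to (\ref{1.2}) yields $\sum_{i,j=1}^{n-1} A_{ij}^0 \z_i\z_j \geqslant c_0|\z|^2$, so $\Ho$ is a uniformly elliptic formally symmetric operator on $\H^2(\mathds{R}^{n-1})$, hence self-adjoint. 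For $\l\notin\spec(\Ho)$ this gives a bounded $(\Ho-\l)^{-1}: L_2(\mathds{R}^{n-1})\to\H^2(\mathds{R}^{n-1})$; for $\l\notin\mathds{K}$ Theorem~\ref{th2.1} supplies a bounded $(\He-\l)^{-1}$ on $L_2(\Om^\e)$, and a coercivity estimate for the sesquilinear form $\he$ (using (\ref{1.2}), an $\e$-explicit trace inequality on $\Om^\e$, and Young's inequality to absorb first-order and boundary terms) upgrades this to the uniform-in-$\e$ bound $\|(\He-\l)^{-1}\|_{L_2(\Om^\e)\to\H^1(\Om^\e)}\leqslant C(\l)$.

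For the two-scale approximation, I set $g^\e := \mathcal{Q}^\e f\in L_2(\mathds{R}^{n-1})$, $u_0:=(\Ho-\l)^{-1}g^\e$, and $v^\e(x):=u_0(x')+\e(\mathcal{W}^\e u_0)(x,\e)$. The corrector is characterized by two requirements. First, the $\e^{-1}$-singular part of $\He v^\e$ must vanish: the terms of order $\e^{-1}$ (coming from $\p_{x_n}^2(\e\mathcal{W}^\e u_0)$, from $\p_{x_n}(A_{nj}^\e\p_{x_j} u_0)$ for $j<n$, and from $\p_{x_n}(\overline{A_n^\e}u_0)$) produce an ODE in the fast variable $\xi = x_n/\e$ whose antiderivatives reproduce the first two summands of $\mathcal{W}^\e$. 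Second, the conormal-plus-Robin expression $\p_{\nu^\e}v^\e+\iu\a^\e v^\e$ must vanish on $\p\Om^\e$ up to $O(\e)$; this fixes the free constant of the ODE and is the origin of the $-\iu\a^\e u_0\int\di t/A_{nn}$ summand of $\mathcal{W}^\e$. Averaging the $O(1)$ part of $(\He-\l)v^\e$ over $\xi\in(-1/2,1/2)$ then produces exactly $(\Ho-\l)u_0 - g^\e$, which vanishes by the choice of $u_0$: the formulae (\ref{1.13}) are precisely the Schur reductions of the integrated fluxes that make this reduction occur. Combining these observations with $\|u_0\|_{\H^2(\mathds{R}^{n-1})}\leqslant C(\l)\|g^\e\|_{L_2}\leqslant C(\l)\e^{-1/2}\|f\|_{L_2(\Om^\e)}$ and a compensating factor $\e^{1/2}$ from integration in $x_n$, one obtains $(\He-\l)v^\e = f + r^\e$ with $\|r^\e\|_{L_2(\Om^\e)}\leqslant C(\l)(\e+\eta(\e))\|f\|_{L_2(\Om^\e)}$; here the $\e$ absorbs higher-order ansatz errors and the boundary residue, while $\eta(\e)$ absorbs the replacement of $\e$-dependent coefficients by their values at $\e=0$ inside the definition of $\Ho$.

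If $v^\e\notin\Dom(\He)$ I would either add a boundary-layer corrector of negligible $L_2$-size supported near $\p\Om^\e$ or work directly with the weak formulation. Then $u^\e-v^\e = -(\He-\l)^{-1}r^\e$, and the $L_2\to L_2$ and $L_2\to\H^1$ bounds from paragraph one, combined with $\|\e\mathcal{W}^\e u_0\|_{L_2(\Om^\e)}\leqslant C\e\|f\|_{L_2(\Om^\e)}$, yield (\ref{1.18}) and (\ref{1.19}). I expect the main obstacle to be the uniform-in-$\e$ $L_2\to\H^1$ bound on $(\He-\l)^{-1}$: the purely imaginary Robin coefficient $\iu\a$ enters the form only through a boundary integral over $\p\Om^\e$ that is potentially large relative to the volume contributions on the thin layer. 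To control it I would use the splitting $L_2(\Om^\e)=L^\e\oplus L^\e_\bot$ from (\ref{1.17}); on $L^\e_\bot$ a transverse Poincar\'e inequality gives an $\e^{-1}$ gain that absorbs the boundary term, while on $L^\e$ one compares directly with $\Ho$. Making this uniform coercivity precise, and balancing the negative powers of $\e$ coming from $\|u_0\|_{\H^2}$ against the positive powers from thin-layer integration so that $r^\e$ retains the small factor $\e+\eta(\e)$ in $L_2(\Om^\e)$, constitutes the main technical work.
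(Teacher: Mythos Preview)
Your overall strategy---prove self-adjointness of $\Ho$ via the Schur complement, build the two-scale corrector $v^\e=u_0+\e\mathcal{W}^\e u_0$, and invert $\He-\l$ on the residual---matches the paper's. The self-adjointness argument is fine (your pointwise Schur-complement bound even gives the sharper constant $c_0$; the paper uses Cauchy--Schwarz in the $\mathrm{A}$-inner product instead). The gap is in the residual estimate.

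Your claim $\|r^\e\|_{L_2(\Om^\e)}\leqslant C(\l)(\e+\eta(\e))\|f\|_{L_2(\Om^\e)}$ is false, for two reasons. First, your ansatz is built from $g^\e=\mathcal{Q}^\e f$ only, so $r^\e$ contains the full orthogonal part $\mathcal{Q}^\e_\bot f$, which is $O(1)$ in $L_2(\Om^\e)$. Second, even for $f\in L^\e$, the quantity $(\He-\l)v^\e$ contains terms such as $-\p_{x_i}\big(A_{ij}^\e\,\p_{x_j}u_0\big)$ with $i,j<n$, whose coefficients $A_{ij}(x',x_n/\e,\e)$ oscillate in $x_n$; only their $\xi$-average collapses to $(\Ho-\l)u_0$, while the mean-zero remainder has $L_2(\Om^\e)$-norm of order $\|f\|_{L_2(\Om^\e)}$, not $\e\|f\|_{L_2(\Om^\e)}$. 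Thus $r^\e=r^\e_\parallel+r^\e_\bot$ with $r^\e_\parallel\in L^\e$ of size $O(\e+\eta(\e))$ but $r^\e_\bot\in L^\e_\bot$ of size $O(1)$.

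The fix is the ingredient you mention, but applied to the residual rather than to the uniform $L_2\to\H^1$ resolvent bound. That uniform bound follows directly from sectoriality (for $\l\notin\mathds{K}$ the numerical-range estimate gives $\|(\He-\l)^{-1}\|_{L_2\to L_2}\leqslant\dist(\l,\mathds{K})^{-1}$, and the form coercivity then controls $\|\nabla\cdot\|$); no splitting is needed there. What you actually need is the sharpened bound
\[
\|(\He-\l)^{-1}g\|_{\H^1(\Om^\e)}\leqslant \e\,C(\l)\,\|g\|_{L_2(\Om^\e)}\qquad\text{for }g\in L^\e_\bot,
\]
which is the paper's Lemma~4.1: testing the form identity against $v:=(\He-\l)^{-1}g$ one uses $(g,v)=(g,\mathcal{Q}^\e_\bot v)$ and the transverse Poincar\'e inequality $\|\mathcal{Q}^\e_\bot v\|\leqslant(\e/2\pi)\|\p_{x_n}v\|$ to bootstrap. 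With this in hand, $(\He-\l)^{-1}r^\e_\bot$ is $O(\e)$ and your argument closes. The paper organises the same idea slightly differently: it first disposes of $\mathcal{Q}^\e_\bot f$ by Lemma~4.1, then works entirely at the level of the sesquilinear form, testing against the difference $U^\e-W^\e$ itself and exploiting that the $O(1)$ pieces of the residual lie in $L^\e_\bot$ so that only $\mathcal{Q}^\e_\bot(U^\e-W^\e)$ is seen---which again gains the factor $\e$ via Poincar\'e.
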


We discuss briefly the results of this theorem. We first note that in comparison with the particular case in \cite{AsAn2012} the coefficients of the limiting operator are rather nontrivial and in fact we have ``mixing'' of the coefficients of the perturbed operator, see (\ref{1.13}).

It should be stressed independently that the estimates for the rates of convergence in Theorem~\ref{th2.2} have the best possible order. Namely, if the coefficients of the perturbed operator and function $f$ are infinitely differentiable so that $\eta(\e)=C\e$, $C=const$, on the basis of multiscale method  \cite{MS} one can construct the complete asymptotic expansion for the function $(\He-\l)^{-1}f$. In what follows we employ similar anz\"atz to construct the asymptotics for the eigenvalues.  In our case they lead to the formulae
\begin{equation*}
(\He-\l)^{-1}f=\big((\Ho-\l)^{-1}\oplus 0\big)f+\e u_1 +\Odr(\e^2+\eta^2(\e))
\end{equation*}
in the norm of $L_2(\Om^\e)$ and
\begin{equation*}
(\He-\l)^{-1}f=\big((\Ho-\l)^{-1}\oplus 0\big)f-\e \mathcal{W}^\e (\mathcal{H}_\a^0-\l)^{-1}\mathcal{Q}^\e f+\e^2 u_2 +\Odr(\e^2)
\end{equation*}
in the norm of $\H^1(\Om^\e)$, where $u_1=u_1(x',x_n\e^{-1})$, $u_2=u_2(x',x_n\e^{-1})$ are some functions depending on the choice $f$. Exactly this fact implies the optimality of the estimates for the rate of convergence in Theorem~\ref{th2.2}.

Our next result describes the convergence of the perturbed spectrum. We stress that in our case we can not employ the classical theorems on spectrum convergence since the perturbed operator $\He$ is not self-adjoint. Moreover, the perturbed and the limiting operator act on different spaces and for the perturbed operator this space also depends on $\e$.

\begin{theorem}\label{th2.3}
The spectrum of operator $\He$ converges to that of operator $\Ho$ as $\e\to+0$. Namely, for each compact set $\mathds{M}\subset \mathds{C}$, $\d>0$ there exist $\kappa(\mathds{M},\d)>0$ such that for $0<\e<\kappa(\mathds{M},\d)$ the part of the spectrum  $\spec(\He)\cap \mathds{M}$ of operator $\He$ is located in the $\d$-neighborhood of the part of the spectrum $\spec(\Ho)\cap\mathds{M}$ of operator $\Ho$. If $\l_0$ is an isolated $m$-multiple eigenvalues of operator $\Ho$, then there exist exactly $m$ eigenvalues of operator $\He$ taken counting multiplicity and converging to $\l_0$ as $\e\to+0$. For sufficiently small $\e$ these eigenvalues are real.
\end{theorem}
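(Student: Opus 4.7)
The plan is to prove Theorem~\ref{th2.3} in three parts: (A) the Hausdorff convergence of $\spec(\He)\cap\mathds{M}$, (B) the exact counting of perturbed eigenvalues near an isolated limiting eigenvalue $\l_0$, and (C) the reality statement, which I expect to be the delicate step.

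For (A) I would argue by contradiction. Theorem~\ref{th2.1} confines $\spec(\He)$ to the parabolic region $\mathds{K}$, so only $\l\in\mathds{M}\cap\mathds{K}$ at distance at least $\d$ from $\spec(\Ho)$ require attention. Suppose $\e_k\to 0$, $\l_k\in\spec(\mathcal{H}_\a^{\e_k})\cap\mathds{M}$ with $\dist(\l_k,\spec(\Ho))\geqslant\d$, and $\l_k\to\l_\infty\notin\spec(\Ho)$. Take normalized eigenfunctions $u_k$ and decompose $u_k=\mathcal{Q}^{\e_k}u_k+\mathcal{Q}_\bot^{\e_k}u_k$ following the splitting (\ref{1.17}). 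Using the eigenvalue equation one rewrites it as a non-self-adjoint Birman--Schwinger type problem on $L^{\e_k}$ in the spirit of \cite{G}, \cite{MSb}, where the coupling goes through $(\Ho-\l_k)^{-1}$ composed with the transverse corrector $\mathcal{W}^{\e_k}$. The uniform-in-$\l$ estimate (\ref{1.18})--(\ref{1.19}) on compact sets disjoint from $\spec(\Ho)$ then forces the $L^{\e_k}$-component of $u_k$ to converge to a nonzero solution of $(\Ho-\l_\infty)v=0$, contradicting $\l_\infty\notin\spec(\Ho)$.

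For (B) I would fix a small circle $\g$ around $\l_0$ enclosing no other point of $\spec(\Ho)$. By part (A) we have $\g\cap\spec(\He)=\emptyset$ for small $\e$, so the Riesz projectors
\[
P^\e:=-\frac{1}{2\pi\iu}\oint_\g(\He-\l)^{-1}\di\l,\qquad P^0:=-\frac{1}{2\pi\iu}\oint_\g(\Ho-\l)^{-1}\di\l
\]
are well defined. Since $C(\l)$ in (\ref{1.18}) is continuous in $\l$ on $\g$, integrating (\ref{1.18}) over $\g$ yields $\|P^\e-(P^0\mathcal{Q}^\e\oplus 0)\|=\Odr(\e+\eta(\e))$. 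Because $P^0$ has rank $m$, a standard Kato-type argument (invertibility of $(P^0\mathcal{Q}^\e\oplus 0)P^\e$ on $\mathrm{Range}(P^\e)$ and vice versa) gives $\dim\mathrm{Range}(P^\e)=m$, hence exactly $m$ eigenvalues of $\He$ counted with algebraic multiplicity lie inside $\g$ and converge to $\l_0$.

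The main obstacle is (C), and here my trick is to exploit $\mathcal{P}$-pseudo-Hermiticity (\ref{1.9}) on the finite-dimensional invariant subspace $V^\e:=\mathrm{Range}(P^\e)$. From $(\He)^*=\mathcal{P}\He\mathcal{P}$ together with $\mathcal{P}^*=\mathcal{P}$ and $\mathcal{P}^2=\I$ one obtains $\mathcal{P}\He=\He^*\mathcal{P}$, so the Hermitian sesquilinear form
\[
[u,v]:=(\mathcal{P}u,v)_{L_2(\Om^\e)}
\]
satisfies $[\He u,v]=[u,\He v]$ for all $u,v\in V^\e$. On the embedded limiting eigenspace $V^0\subset L^\e$, whose elements are independent of $x_n$, $\mathcal{P}$ acts as the identity, so $[\cdot,\cdot]$ reduces to the $L_2$ inner product and is positive definite. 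The operator-norm convergence $P^\e\to P^0\mathcal{Q}^\e\oplus 0$ from (B) makes $V^\e$ close to $V^0$ in the gap metric, and hence the Gram matrix of $[\cdot,\cdot]$ in any basis of $V^\e$ chosen close to a basis of $V^0$ stays positive definite for small $\e$. Thus $(V^\e,[\cdot,\cdot])$ is an $m$-dimensional genuine Hilbert space on which $\He|_{V^\e}$ is self-adjoint, so all $m$ perturbed eigenvalues near $\l_0$ are real regardless of multiplicity. The subtle point is quantitative: one must control the Gram matrix through the resolvent estimates, which is precisely where (\ref{1.18})--(\ref{1.19}) is needed.
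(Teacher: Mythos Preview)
Your plan for (C) is essentially the paper's own trick: the paper computes, for each perturbed eigenfunction $\psi^\e$,
\[
0=((\He-\l^\e)\psi^\e,\mathcal{P}\psi^\e)_{L_2(\Om^\e)}=(\overline{\l^\e}-\l^\e)(\psi^\e,\mathcal{P}\psi^\e)_{L_2(\Om^\e)},
\]
and then shows $(\psi^\e,\mathcal{P}\psi^\e)_{L_2(\Om^\e)}=1+o(1)$ from the closeness of $\psi^\e$ to a $\xi$-independent limiting eigenfunction. Your formulation via the positive definiteness of the Gram matrix of $[\,\cdot\,,\cdot\,]=(\mathcal{P}\cdot,\cdot)$ on $V^\e$ is the same idea packaged globally; both versions use $\mathcal{P}$-pseudo-Hermiticity and the fact that $\mathcal{P}$ is the identity on the limiting eigenspace.

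There is, however, a genuine gap in (A) and (B). You invoke the estimates (\ref{1.18})--(\ref{1.19}) on the contour $\g$ and at the points $\l_k$; but Theorem~\ref{th2.2} is stated only for $\l\in\mathds{C}\setminus(\mathds{K}\cup\spec(\Ho))$, and since $\l_0\in\mathds{R}\subset\mathds{K}$, any small circle $\g$ around $\l_0$ lies \emph{inside} $\mathds{K}$. So neither the sentence ``the uniform-in-$\l$ estimate (\ref{1.18})--(\ref{1.19}) on compact sets disjoint from $\spec(\Ho)$'' nor ``integrating (\ref{1.18}) over $\g$'' is justified as written. The whole chain (A)$\to$(B)$\to$(C) then breaks, because your positivity argument in (C) rests on the Riesz projector convergence from (B).

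The paper circumvents this by never using the resolvent estimate at a variable $\l$. It fixes a single $\mu$ at distance $1$ from $\mathds{K}\cup\spec(\Ho)$, writes $(\He-\l)=(\He-\mu)+(\mu-\l)$, applies $(\He-\mu)^{-1}$ (for which Theorem~\ref{th2.2} \emph{does} apply), and reduces the eigenvalue problem to an equation $(\Ho-\l+\mathcal{T}_3^\e(\l))U=0$ on $L_2(\mathds{R}^{n-1})$ with $\|\mathcal{T}_3^\e(\l)\|\to0$ uniformly on compacta. This simultaneously yields (A) and, via the Laurent expansion of $(\Ho-\l)^{-1}$ at $\l_0$, an $m\times m$ matrix equation whose determinant is handled by Rouch\'e (Lemma~5.1) together with a rank argument (Lemma~5.2) to match zero orders with multiplicities. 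Your Riesz-projector route for (B) is a legitimate alternative, but only once you have extended the uniform resolvent convergence into $\mathds{K}$---and that extension is precisely what the fixed-$\mu$ Birman--Schwinger reduction provides. Also, the coupling in the Birman--Schwinger reduction is through the difference of resolvents $\mathcal{T}_1^\e=(\He-\mu)^{-1}-(\Ho-\mu)^{-1}\mathcal{Q}^\e\oplus0$, not through the corrector $\mathcal{W}^\e$; the latter plays no role here.
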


This theorem states the spectrum convergence in each compact part of the complex plane. At that, it does not exclude the existence of points in the spectrum tending to infinity as
$\e\to+0$.  The statement on reality of eigenvalues converging to isolated limiting eigenvalues deserves special attention. The only requirement is the finite multiplicity of the limiting eigenvalue, and generally speaking, the perturbed eigenvalues converging to this limiting one are not necessary simple. This is an advantage of our result in comparison with similar statements in   \cite{BK1}, \cite{IMM2012}, where the simplicity of the eigenvalues was the basis for the proof of the reality.

Our next result is devoted to obtaining the complete asymptotic expansions of the perturbed operator converging to limiting eigenvalues of finite multiplicity as well as to obtaining the complete asymptotic expansions for the associated eigenfunctions.

\begin{theorem}\label{th2.4}
Suppose that the functions $A_{ij}$, $A_j$, $A_0$, $a$ are independent of $\e$, infinitely differentiable w.r.t. $x$ so that for each $\b\in \mathds{Z}_+^{n-1}$
\begin{equation}\label{1.20}
\frac{\p^{|\b|} A_{ij}}{\p {x'}^\b},\ \frac{\p^{|\b|} A_j}{\p {x'}^\b},\  \frac{\p^{|\b|} A_0}{\p {x'}^\b},\ \frac{\p^{|\b|} \a}{\p {x'}^\b}\in C^2(\overline{\Om})\cap L_\infty(\overline{\Om}).
\end{equation}
Let $\l^0$ be an isolated $m$-multiple eigenvalue of operator $\Ho$. Then asymptotic expansions of eigenvalues $\l_k^\e$, $k=1,\ldots,m$, converging to $\l^0$ as $\e\to+0$ read as
\begin{equation}\label{1.22}
\l_k^\e=\l^0+\sum\limits_{p=1}^{\infty} \e^p \L_k^{(p)},
\end{equation}
where numbers $\L_k^{(p)}$ are defined in the sixth section. If there exists $r>0$ such that for all sufficiently small $\e$
\begin{equation}\label{1.23}
|\l_k^\e-\l_j^\e|\geqslant C\e^r,\quad k\not=j,
\end{equation}
where $C$ is a constant independent of $\e$, $k$, $j$, then the eigenfunctions associated with $\l_k^\e$ can be chosen so that they satisfy the asymptotics
\begin{equation}\label{1.24}
\psi_k^\e(x)=\e^{-1/2} \left(\phi_k(x')+\sum\limits_{p=1}^{\infty} \e^p\phi_k^{(p)}(x',\xi)\right)
\end{equation}
in the norm of $\H^1(\Om^\e)$, where the terms of this series are determined in the sixth section.
\end{theorem}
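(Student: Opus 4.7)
The plan is the standard two-stage homogenization argument: a formal multiscale construction followed by a functional-analytic justification that replaces the min-max technique of the self-adjoint case.

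For the formal construction I introduce the fast variable $\xi=x_n/\e$ and postulate the ans\"atze (\ref{1.22})--(\ref{1.24}) with $\phi_k^{(0)}(x',\xi)=\phi_k(x')$. Substituting into $(\He-\l_k^\e)\psi_k^\e=0$ together with the Robin condition (\ref{1.5}) and using $\p/\p x_n=\e^{-1}\p/\p\xi$, I match powers of $\e$. Each order $p\geqslant 1$ produces an inhomogeneous ODE in $\xi\in(-1/2,1/2)$ with principal part $-\p_\xi A_{nn}(x',\xi,0)\p_\xi$ and Robin-type traces at $\xi=\pm 1/2$, whose data depend on previously constructed $\phi_k^{(q)}$, $q<p$, and on $\L_k^{(1)},\ldots,\L_k^{(p-1)}$. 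Integrating in $\xi$ against the constant function produces the Fredholm solvability condition, from which emerges the effective equation for the $x'$-profile together with the scalar $\L_k^{(p)}$. At leading order this is precisely $(\Ho-\l^0)\phi_k=0$; the coefficients (\ref{1.13}) of $\Ho$ and the corrector operator $\mathcal{W}^\e$ arise naturally from $\xi$-averaging of the first cell problem. Since $\l^0$ has multiplicity $m$, the first-order compatibility reduces to an $m\times m$ matrix eigenvalue problem on $\ker(\Ho-\l^0)$ whose eigenvalues are the $\L_k^{(1)}$, and higher orders are handled inductively in the residual eigenspaces via the Fredholm alternative plus a standard orthogonality normalization of $\phi_k^{(p)}$. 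The smoothness assumption (\ref{1.20}) propagates the regularity of all profiles in both $x'$ and $\xi$.

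The main obstacle is the justification, since $\He$ is non-self-adjoint and the approach of \cite{NazB,Izv2003} cannot be applied directly. I would form the $N$-th truncations $\psi_k^{\e,N}$, $\l_k^{\e,N}$ of the formal series and verify by direct substitution that $\|(\He-\l_k^{\e,N})\psi_k^{\e,N}\|_{L_2(\Om^\e)}\leqslant C_N\e^N$. By Theorem~\ref{th2.3}, for small $\e$ exactly $m$ eigenvalues of $\He$ (counting algebraic multiplicity) lie inside a fixed contour $\Gamma$ around $\l^0$ separated from $\spec(\Ho)\setminus\{\l^0\}$. Introduce the Riesz projector
\begin{equation*}
P^\e:=-\frac{1}{2\pi\iu}\oint_{\Gamma}(\He-z)^{-1}\,\di z
\end{equation*}
onto the corresponding $m$-dimensional invariant subspace; the uniform bound on $\|(\He-z)^{-1}\|$ for $z\in\Gamma$ supplied by Theorem~\ref{th2.2} together with the resolvent identity applied to $(\He-\l_k^{\e,N})\psi_k^{\e,N}$ gives $\|(\I-P^\e)\psi_k^{\e,N}\|_{L_2(\Om^\e)}\leqslant \t C_N\e^N$. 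Restricting $\He$ to $\mathrm{ran}\,P^\e$ and using the projected approximants as a basis reduces the problem to an $m\times m$ matrix whose entries differ from $\diag(\l_k^{\e,N})$ by $O(\e^{N+1})$, which yields (\ref{1.22}). Under (\ref{1.23}) one-dimensional spectral projectors $P_k^\e$ are well-defined for small $\e$, and the normalized vectors $P_k^\e\psi_k^{\e,N}/\|P_k^\e\psi_k^{\e,N}\|_{L_2(\Om^\e)}$ satisfy (\ref{1.24}) in the $\H^1$-norm because the missing components are smaller than every power of $\e$. The most delicate point is upgrading the residual and projector estimates from $L_2$ to $\H^1$: this requires propagating energy bounds through the resolvent with constants controlled by the positive distance of $\Gamma$ from $\l^0$, and exploiting the explicit corrector structure of $\psi_k^{\e,N}$ (in particular the $\mathcal{W}^\e$-type terms) to absorb the $\e^{-1}$ factors produced by $\xi$-differentiation.
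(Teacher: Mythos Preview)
Your formal construction coincides with the paper's Section~6: the same multiscale ans\"atze, the same hierarchy of cell problems in $\xi$ (the paper's Lemma~6.1 plays the role of your ``integrate against the constant''), and the same mechanism by which the solvability condition at order $p+2$ produces an effective $x'$-equation whose compatibility on $\ker(\Ho-\l^0)$ fixes $\L_k^{(p)}$. The residual bound you quote is the paper's Lemma~6.3 (with exponent $N-1$ rather than $N$, but this is immaterial).

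The justification, however, follows a genuinely different route. The paper does \emph{not} use a generic Riesz projector plus matrix perturbation. It exploits the $\mathcal{PT}$-structure explicitly in two preparatory lemmata that you bypass. First (Lemma~7.1), $\mathcal{P}$-pseudo-Hermiticity allows the perturbed eigenfunctions to be normalized so that $(\psi_k^\e,\mathcal{P}\psi_j^\e)_{L_2(\Om^\e)}=\d_{kj}$; this indefinite biorthogonality replaces your $L_2$-Gram matrix. Second (Lemma~7.2), this pairing forces the quasinilpotent parts in the Kato decomposition of $(\He-\l)^{-1}$ to vanish, giving the explicit partial-fraction formula
\[
(\He-\l)^{-1}=\sum_{k=1}^m\frac{(\,\cdot\,,\mathcal{P}\psi_k^\e)_{L_2(\Om^\e)}}{\l_k^\e-\l}\,\psi_k^\e+\mathcal{T}_8^\e(\l),
\]
with $\mathcal{T}_8^\e$ bounded uniformly in $\e$ and $\l$ as an operator $L_2(\Om^\e)\to\H^1(\Om^\e)$. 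Applying this to the quasimodes and pairing with $\mathcal{P}\psi_j^\e$ yields a matrix $F^\e$ with entries $F_{kj}^\e=(f_k^{(\e,N)},\mathcal{P}\psi_j^\e)/(\l_j^\e-\l_k^{(\e,N)})$; the paper shows $|\det F^\e|\to1$, extracts a permutation with $|F_{kq_k}^\e|$ bounded below, and reads off $|\l_{q_k}^\e-\l_k^{(\e,N)}|=O(\e^{N-1})$ \emph{directly}, with no intermediate matrix-eigenvalue step. Under (\ref{1.23}) the same identity, now with $|\l_j^\e-\l_k^{(\e,N)}|\geqslant C\e^r$ for $j\neq k$, gives the $\H^1$-approximation of $\psi_k^\e$ by $\phi_k^{(\e,N)}$ immediately.

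Your projector route can be completed, but two steps you pass over need care. The inference ``matrix equals $\diag(\l_k^{\e,N})+O(\e^{N})$, hence (\ref{1.22})'' is not automatic for a non-normal matrix: absent semisimplicity one only obtains eigenvalue displacement $O(\e^{N/m})$ (Ostrowski/Elsner), which is harmless because $N$ is arbitrary but should be stated. For the eigenfunctions, your one-dimensional projectors $P_k^\e$ must be bounded uniformly in $\e$; in the matrix picture this is the conditioning of the eigenbasis of $\diag(\l_k^{\e,N})+O(\e^{N})$ under the gap (\ref{1.23}), which costs a factor $\e^{-r}$ that you again absorb by enlarging $N$. The paper's $\mathcal{P}$-biorthogonality delivers both the semisimplicity and the uniform $\H^1$-bound on the analytic remainder $\mathcal{T}_8^\e$ in one stroke, whereas your scheme defers the $L_2\to\H^1$ upgrade to a separate energy argument.
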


It should be noticed that the independence of the functions
$A_{ij}$, $A_j$, $A_0$, $\a$ of $\e$ is inessential and is made just for simplicity. In the case the coefficients depend on $\e$, the construction remains the same and all the formulae in the sixth section are kept unchanged, one just need to assume that the coefficients of the asymptotics depend on
$\e$ and this dependence is originated by the same dependence for $A_{ij}$, $A_j$, $A_0$, $\a$. At the same time, to justify the asymptotic expansions one has to suppose the uniform $\e$ boundedness in the norm of $L_\infty(\Om)$ for all the derivatives in (\ref{1.20}). In the case, if the coefficients $A_{ij}$, $A_j$, $A_0$, $\a$ are expanded into asymptotic series w.r.t. $\e$, then one can substitute these expansions into the formulae for the coefficients of asymptotics (\ref{1.22}), (\ref{1.24}), obtain similar expansions for the coefficients and substitute them into series (\ref{1.22}), (\ref{1.24}). The obtained double asymptotic series are the asymptotics for the eigenvalues and the eigenfunctions of the perturbed operator. The described constructions, based on simple ideas,  are rather cumbersome and bulky from technical point of views. This is the reason why we do not provide these calculations in the work restricting ourselves just to the case when functions $A_{ij}$, $A_j$, $A_0$ are independent of $\e$.


\section{Qualitative properties of operator $\He$}

In the present section we prove Theorem~\ref{th2.1}. The main ideas are borrowed from \cite[\S 3]{BK1}.

The proof is based on the theory of sectorial sesquilinear forms, cf. \cite[Гл. V\!I]{K}. In the space $L_2(\Om^\e)$ we define sesquilinear form
\begin{align}
&
\begin{aligned}
\he(u,v):=&\sum\limits_{i,j=1}^{n} \left(A_{ij}^\e\frac{\p u}{\p x_i},\frac{\p v}{\p x_j}\right)_{L_2(\Om^\e)}+ \sum\limits_{j=1}^{n}  \left(A_j^\e\frac{\p u}{\p x_j},v\right)_{L_2(\Om^\e)}
\\
&+ \sum\limits_{j=1}^{n} \left(u,A_j^\e\frac{\p v}{\p x_j}\right)_{L_2(\Om^\e)}
+(A_0^\e u,v)_{L_2(\Om^\e)}+\iu \bb(\a^\e u,v),
\end{aligned}\label{3.1}
\\
&
\bb(u,v):=\int\limits_{\mathds{R}^{n-1}}  u\left(x',\frac{\e}{2}\right) \overline{v}\left(x',\frac{\e}{2}\right)\di x'
-  \int\limits_{\mathds{R}^{n-1}} u\left(x',-\frac{\e}{2}\right) \overline{v}\left(x',-\frac{\e}{2}\right)\di x',\nonumber
\end{align}
on the domain $\Dom(\he):=\H^1(\Om^\e)$. Here $\bb$ is to be treated as a sesquilinear form in  $L_2(\Om^\e)$ on the domain $\H^1(\Om^\e)$.

According to \cite[Гл. V\!I, \S 1.1]{K}, the real and imaginary parts of form $\he$ read as
\begin{equation}\label{3.2}
\begin{aligned}
\her(u,v):=&\sum\limits_{i,j=1}^{n} \left(A_{ij}^\e\frac{\p u}{\p x_i},\frac{\p v}{\p x_j}\right)_{L_2(\Om^\e)}
+  \sum\limits_{j=1}^{n} \left( \left(A_j^\e\frac{\p u}{\p x_j},v\right)_{L_2(\Om^\e)} + \left(u, A_j^\e\frac{\p v}{\p x_j}\right)_{L_2(\Om^\e)} \right)
\\
&+\frac{1}{2}\left((A_0^\e u,v)_{L_2(\Om^\e)} + (u, A_0^\e v)_{L_2(\Om^\e)}\right)
\end{aligned}
\end{equation}
and
\begin{equation}\label{3.3}
\hei(u,v):=
\frac{1}{2\iu}\left((A_0^\e u,v)_{L_2(\Om^\e)} - (u, A_0^\e v)_{L_2(\Om^\e)}\right)+\bb(\a^\e u,v).
\end{equation}
The domain of these forms is again the space $\H^1(\Om^\e)$.

It is clear  that form $\her$ is densely defined, symmetric, and closed. The elementary estimates
\begin{equation}\label{3.4}
\begin{aligned}
 &\left|
\sum\limits_{j=1}^{n} \left(A_j^\e\frac{\p u}{\p x_j},u\right)_{L_2(\Om^\e)} + \left(u, A_j^\e\frac{\p u}{\p x_j}\right)_{L_2(\Om^\e)}
\right|
\\
&\hphantom{\sum\limits_{j=1}^{n}11}\leqslant  \sum\limits_{j=1}^{n} \sup\limits_{\overline{\Om}^\e} |A_j^\e| \Big\|\frac{\p u}{\p x_j}\Big\|_{L_2(\Om^\e)}\|u\|_{L_2(\Om^\e)}
\leqslant \d\|\nabla u\|_{L_2(\Om^\e)}^2+ c_1\d^{-1}\|u\|_{L_2(\Om^\e)}^2,
\end{aligned}
\end{equation}
$\d$ is arbitrary, and
\begin{equation}\label{3.5}
\left|((\RE A_0^\e)u,u)_{L_2(\Om^\e)}\right|\leqslant c_2\|u\|_{L_2(\Om^\e)}^2
\end{equation}
imply the lower semi-boundedness of form $\her$:
\begin{equation}\label{3.6}
\her(u,u)\geqslant -\frac{c_1+c_2c_0}{c_0} \|u\|_{L_2(\Om^\e)}^2.
\end{equation}

It is easy to see that boundary term in form $\hei$ can be estimated from below as
\begin{equation}\label{3.7}
\left|\bb(\a^\e u,u)\right|
=\left|\int\limits_{\Om^\e} \a^\e(x')\frac{\p|u|^2}{\p x_n}\di x\right|\leqslant c_3\|\nabla u\|_{L_2(\Om^\e)}\|u\|_{L_2(\Om^\e)}.
\end{equation}
Together with estimate (\ref{3.4}) for $\d=c_0/2$ and for arbitrary $\d$ it follows that form $\hei$ is relatively bounded w.r.t. form $\her$. Namely, for each $\d>0$ the estimate
\begin{equation}\label{3.8}
|\hei(u,u)|\leqslant \d|\her(u,u)|+C(\d) \|u\|_{L_2(\Om^\e)}^2
\end{equation}
holds true, where $C(\d)$ is a constant independent of  $u$.
By \cite[Гл. V\!I, \S 1, Теорема 1.33]{K}, the obtained properties of forms $\her$ and $\hei$ yield that form $\he$ is sectorial. Applying then first representation theorem \cite[Гл. V\!I, \S 2.1, Теорема 2.1]{K}, we conclude that there exists  $m$-sectorial operator $\widetilde{\mathcal{H}}^\e_\a$ such that \begin{equation}\label{3.9}
\he(u,v)=(\widetilde{\mathcal{H}}^\e_\a u,v)_{L_2(\Om^\e)}
\end{equation}
for each $u\in\Dom(\widetilde{\mathcal{H}}^\e_\a)$, $v\in\Dom(\he)$. The domain of operator $\widetilde{\mathcal{H}}^\e_\a$ consists of functions $u\in\Dom(\he)$ such that there exists a function $f\in L_2(\Om^\e)$ depending on the choice $u$  and satisfying
identity
\begin{equation}\label{3.10}
\he(u,v)=(f,v)_{L_2(\Om^\e)}
\end{equation}
for each $v\in \Dom(\he)$. It is easy to make sure that $\Dom(\He)\subseteq\Dom(\widetilde{\mathcal{H}}^\e_\a)$ and operator $\widetilde{\mathcal{H}}^\e_\a$ is the extension of operator $\He$. To prove  $m$-{sectoriality} of the operator, now it is sufficient to check the identity $\He=\widetilde{\mathcal{H}}^\e_\a$ that is equivalent to the coinciding of domains. On the other hand, the latter is equivalent to the fact that each solution of integral identity (\ref{3.10}) for each $f\in L_2(\Om^\e)$ belongs to $\Dom(\He)$. This fact means the validity of smoothness improving theorems for generalized solutions of elliptic boundary value problems. In our case such theorem can be easily proven in the standard way by analyzing difference quotients (see, for instance, \cite[Гл. V\!I, \S 2]{Mi}, \cite[Lm. 3.2]{BK1}), and thus we do not provide the proof.

We apply  Theorem~2.5 from \cite[Гл. V\!I, \S 2.1]{K} to obtain that adjoint operator for $\He$ corresponds to the adjoint form
$(\he)^*$ in the sense of the first representation theorem.
According to \cite[Гл. V\!I, \S 1.1]{K} and by idendities (\ref{3.3}) the adjoint form reads as
\begin{equation*}
(\he)^*(u,v)=\overline{\mathfrak{h}_\a^\e(u,v)}= \mathfrak{h}_{-\a}^\e(u,v)
\end{equation*}
that implies identity (\ref{1.11}). Identities (\ref{1.9}), (\ref{1.10}) now can be proven by straightforward calculations with employing the relations (\ref{1.3}).

It remains to prove inclusion (\ref{1.12}). The spectrum of $m$-sectorial operator is a subset of its numerical range \cite[Гл. V, \S 3.10]{K} and thus it is sufficient to prove the inclusion for the range of our operator. Immediately from estimates  (\ref{3.4}), (\ref{3.5}), (\ref{3.7}) and ellipticity condition (\ref{1.2}) the inequalities
\begin{equation}\label{3.11}
\begin{aligned}
&|\hei(u,u)|\leqslant 
 c_3\|\nabla u\|_{L_2(\Om^\e)} \|u\|_{L_2(\Om^\e)} + c_2\|u\|_{L_2(\Om^\e)}^2,
\\
&|\her(u,u)|\geqslant c_0\|\nabla u\|_{L_2(\Om^\e)}^2 - 2 c_1  \|\nabla u\|_{L_2(\Om^\e)} \|u\|_{L_2(\Om^\e)} - c_2 \|u\|_{L_2(\Om^\e)}^2
\end{aligned}
\end{equation}
follow. We let $\|u\|_{L_2(\Om^\e)}=1$, solve the second inequality w.r.t. $\|\nabla u\|_{L_2(\Om^\e)}$, and  substitute the obtained estimate for  $\|\nabla u\|_{L_2(\Om^\e)}$ into the first inequality for $\hei(u,u)$. Then we get
\begin{equation}\label{3.12}
\begin{aligned}
|\hei(u,u)|&\leqslant \frac{c_3}{c_0}\left(c_1+\sqrt{c_1^2+c_0(|\her(u,u)|+c_2)}\right)+c_2
\\
&\leqslant \frac{c_3}{\sqrt{c_0} }\sqrt{|\her(u,u)|}+\frac{(c_1+\sqrt{c_1^2+c_0 c_2})c_3}{c_0}+c_2\quad \text{при}\quad \|u\|_{L_2(\Om^\e)}=1.
\end{aligned}
\end{equation}
This inequality and the aforementioned inclusion of  spectrum into numerical range yield  (\ref{1.12}). The proof of Theorem~\ref{th2.1} is complete.

\section{Uniform resolvent convergence}

In the present section we study the behavior of the resolvent for operator as $\e\to+0$ and prove Theorem~\ref{th2.2}. Throughout the section by $C(\l)$ we denote inessential constants independent of $\e$, $x$, and $f$, but, generally speaking, they depend on $\l$.

By Theorem~\ref{th2.1} as $\l\in\mathds{C}\setminus\mathds{K}$ the resolvent $(\He-\l)^{-1}$ is well-defined. The next lemma is the key one in the proof of Theorem~\ref{th2.2}.
\begin{lemma}\label{lm4.1}
Let $\l\in\mathds{C}\setminus\mathds{K}$, $f\in L^\e_\bot$. Then the uniform in $\e$ and $\l$  estimate
\begin{equation*}
\|(\He-\l)^{-1}f\|_{\H^1(\Om^\e)}\leqslant \e C(\l)\|f\|_{L_2(\Om^\e)}.
\end{equation*}
\end{lemma}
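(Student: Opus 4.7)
The plan is to exploit two facts in combination: a Poincaré--Wirtinger inequality in the thin (vertical) direction for functions with zero fiber mean, together with the quantitative numerical range bound coming from $\l\notin\mathds{K}$ and the sectoriality proved in Theorem~\ref{th2.1}. Throughout, set $u:=(\He-\l)^{-1}f$, so that $\he(u,u)-\l\|u\|_{L_2(\Om^\e)}^2=(f,u)_{L_2(\Om^\e)}$.

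First I would record that $\mathcal{Q}^\e$ is an orthogonal projector in $L_2(\Om^\e)$ (this is immediate from \eqref{1.15}: it is idempotent and $(\mathcal{Q}^\e u,v)=(u,\mathcal{Q}^\e v)$ by Fubini), so that $f\in L_\bot^\e$ means $\mathcal{Q}^\e f=0$ and hence $(f,u)=(f,\mathcal{Q}^\e_\bot u)$. Since $\mathcal{Q}^\e u$ is independent of $x_n$, one has $\p_{x_n}(\mathcal{Q}^\e_\bot u)=\p_{x_n}u$, and the one-dimensional Poincaré--Wirtinger inequality on the fiber $(-\e/2,\e/2)$ yields
\begin{equation*}
\|\mathcal{Q}^\e_\bot u\|_{L_2(\Om^\e)}\leqslant C\e\,\|\p_{x_n}u\|_{L_2(\Om^\e)}.
\end{equation*}
Combining these gives the key inequality $|(f,u)|\leqslant C\e\,\|f\|_{L_2(\Om^\e)}\|\nabla u\|_{L_2(\Om^\e)}$, which is where the smallness in $\e$ is generated.

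Next I would bring in the two-sided control from the form. Because $\mathds{K}$ contains the numerical range of $\He$, for $\l\notin\mathds{K}$ we have $|\he(u,u)-\l\|u\|^2|\geqslant\dist(\l,\mathds{K})\|u\|^2$, so
\begin{equation*}
\|u\|_{L_2(\Om^\e)}^2\leqslant C(\l)\,|(f,u)|\leqslant C(\l)\,\e\,\|f\|_{L_2(\Om^\e)}\|\nabla u\|_{L_2(\Om^\e)}.
\end{equation*}
In parallel, I would use the coercivity of $\her$ obtained in \eqref{3.11}: taking the real part of $\he(u,u)-\l\|u\|^2=(f,u)$ and using $\her(u,u)\geqslant\frac{c_0}{2}\|\nabla u\|^2-C\|u\|^2$ (absorb $2c_1\|\nabla u\|\|u\|$ by Young), one gets
\begin{equation*}
\tfrac{c_0}{2}\|\nabla u\|_{L_2(\Om^\e)}^2\leqslant |(f,u)|+C(\l)\,\|u\|_{L_2(\Om^\e)}^2\leqslant C(\l)\,\e\,\|f\|_{L_2(\Om^\e)}\|\nabla u\|_{L_2(\Om^\e)},
\end{equation*}
where in the last step I reused the two bounds above. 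Dividing by $\|\nabla u\|$ gives $\|\nabla u\|\leqslant C(\l)\e\|f\|$, and feeding this back into the $L_2$ estimate yields $\|u\|\leqslant C(\l)\e\|f\|$, which together constitute the claim.

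The one step that deserves care is the numerical-range bound: it must be truly quantitative and uniform in $\e$. This is where the explicit form of $\mathds{K}$ in \eqref{1.12} is important, since $\mathds{K}$ depends only on the constants $c_0,c_1,c_2,c_3$, all of which are $\e$-independent, so $\dist(\l,\mathds{K})$ is a fixed positive quantity for fixed $\l\notin\mathds{K}$. The rest is a straightforward bootstrap. Everything else is routine: the coercivity estimate is already available from Section~3, the Poincaré--Wirtinger constant on $(-\e/2,\e/2)$ is $\e/\pi$ (or any convenient explicit constant), and the self-adjointness of $\mathcal{Q}^\e$ is a direct Fubini computation.
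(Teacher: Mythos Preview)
Your proof is correct and follows essentially the same route as the paper's own argument: both combine the numerical-range bound coming from $\l\notin\mathds{K}$, the coercivity of $\her$ from \eqref{3.11}, and the Poincar\'e--Wirtinger inequality $\|\mathcal{Q}_\bot^\e u\|_{L_2(\Om^\e)}\leqslant \frac{\e}{2\pi}\|\p_{x_n}u\|_{L_2(\Om^\e)}$ (the paper obtains this via Fourier series in $x_n$). The only cosmetic difference is ordering: you apply Poincar\'e--Wirtinger immediately to bound $|(f,u)|$ by $C\e\|f\|\,\|\nabla u\|$ and then bootstrap, whereas the paper carries the quantity $\|v_\bot^\e\|$ through estimates \eqref{4.2}--\eqref{4.3} and only inserts the Poincar\'e--Wirtinger bound \eqref{4.4a} at the end; the logical content is identical.
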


\begin{proof}
Denote $v^\e:=(\He-\l)^{-1}f$. Then the results of the previous section and the belonging  $f\in L^\e_\bot$ follow that function $v^\e$ satisfies identity
\begin{equation}\label{4.1}
\he(v^\e,v^\e)-\l\|v^\e\|_{L_2(\Om^\e)}^2=(f,v^\e)_{L_2(\Om^\e)} =(f,v^\e_\bot)_{L_2(\Om^\e)},
\end{equation}
where $v^\e_\bot:=\mathcal{Q}^\e_\bot v^\e\in L^\e_\bot$. Since by the results of the previous section
$\he(v^\e,v^\e)/\|v^\e\|_{L_2(\Om^\e)}^2\in\mathds{K}$ (see (\ref{3.12})), by identity (\ref{4.1}) we obtain
\begin{equation}\label{4.2}
\|v^\e\|_{L_2(\Om^\e)}\leqslant \frac{\|f\|_{L_2(\Om^\e)} \|v^\e_\bot\|_{L_2(\Om^\e)}}{\dist(\mathds{K},\l)}.
\end{equation}

We take now the real part of identity (\ref{4.1}):
\begin{equation*}
\her(v^\e,v^\e)-\RE\l \|v^\e\|_{L_2(\Om^\e)}^2\leqslant \|f\|_{L_2(\Om^\e)}\|v^\e_\bot\|_{L_2(\Om^\e)}
\end{equation*}
and employ estimate (\ref{3.11}):
\begin{equation}\label{4.5}
\begin{aligned}
c_0\|\nabla v^\e\|_{L_2(\Om^\e)}^2&-2c_1 \|\nabla v^\e\|_{L_2(\Om^\e)} \|v^\e\|_{L_2(\Om^\e)}
\\
&- (c_2+\RE\l) \|v^\e\|_{L_2(\Om^\e)}^2\leqslant \|f\|_{L_2(\Om^\e)}\|v^\e_\bot\|_{L_2(\Om^\e)}.
\end{aligned}
\end{equation}
By (\ref{4.2}) we deduce
\begin{equation}
\|\nabla v^\e\|_{L_2(\Om^\e)}^2\leqslant C(\l) \|f\|_{L_2(\Om^\e)}\|v^\e_\bot\|_{L_2(\Om^\e)}.\label{4.3}
\end{equation}

Expanding function $v^\e$ into the standard Fourier series in variable $x_n$, we arrive at the inequality
\begin{equation}\label{4.4a}
\|v^\e_\bot\|_{L_2(\Om^\e)} \leqslant \frac{\e}{2\pi}\Big\|\frac{\p v^\e}{\p x_n}\Big\|_{L_2(\Om^\e)} \leqslant \frac{\e}{2\pi} \|\nabla v^\e\|_{L_2(\Om^\e)}.
\end{equation}
We substitute this estimate first into the left hand side of  (\ref{4.3}):
\begin{equation}\label{4.4}
\|v^\e_\bot\|_{L_2(\Om^\e)}\leqslant \e^2 C(\l) \|f\|_{L_2(\Om^\e)},
\end{equation}
and then into the right hand side:
\begin{equation}\label{4.5a}
\|\nabla v^\e\|_{L_2(\Om^\e)}\leqslant \e C(\l) \|f\|_{L_2(\Om^\e)}.
\end{equation}
It follows now from inequality (\ref{4.2}) that
\begin{equation}\label{4.6}
\|v^\e\|_{L_2(\Om^\e)}\leqslant  \e C(\l) \|f\|_{L_2(\Om^\e)}.
\end{equation}
Together with (\ref{4.5a}) it completes the proof.
\end{proof}

In the next lemma we prove self-adjointness of operator $\Ho$ and estimate its resolvent.

\begin{lemma}\label{lm4.2}
Operator $\Ho$ is self-adjoint. For each $\l\not\in\spec(\Ho)$ and each $F\in L_2(\mathds{R}^{d-1})$ the estimate
\begin{equation*}
\|(\Ho-\l)^{-1}F\|_{\H^2(\mathds{R}^{n-1})} \leqslant C(\l)\|F\|_{L_2(\mathds{R}^{n-1})}
\end{equation*}
holds true, where $C(\l)$ is a constant independent of $F$.
\end{lemma}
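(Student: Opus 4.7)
The plan is to apply the sesquilinear form strategy of Section 3, now in the simpler setting where the reduced operator $\Ho$ turns out to be formally self-adjoint. First I would verify that the coefficients in (\ref{1.13}) have the required symmetry: the $(n-1)\times(n-1)$ matrix $\{A_{ij}^0\}$ is real, symmetric, and uniformly elliptic; the first-order form $A_j^0\p_j u - \p_j\overline{A_j^0}u$ is automatically formally symmetric (an integration by parts in the sesquilinear form removes the dependence on whether $A_j^0$ is real); and the potential $A_0^0$ is real-valued. Symmetry is immediate from $A_{ji}=A_{ij}$; uniform ellipticity is the pointwise Schur-complement identity
\begin{equation*}
\sum\limits_{i,j=1}^{n-1}\left(A_{ij}-\frac{A_{in}A_{nj}}{A_{nn}}\right)\z_i\z_j=\sum\limits_{i,j=1}^{n}A_{ij}\z_i\z_j\bigg|_{\z_n=-\sum_{k=1}^{n-1}A_{nk}\z_k/A_{nn}}\geqslant c_0|\z'|^2,
\end{equation*}
which survives integration over $\xi\in(-1/2,1/2)$. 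Lipschitz regularity of $A_{ij}^0$ and $A_j^0$ and boundedness of $A_0^0$ follow by differentiating under the integral sign, using $A_{nn}\geqslant c_0$ and the $C^1$ regularity assumed in (\ref{1.1}).

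The main obstacle is the reality of $A_0^0$: the integrand in (\ref{1.13}) explicitly contains $-2\iu\a\RE A_n/A_{nn}$, so reality must be extracted from cancellation after integrating in $\xi$. The symmetry relations (\ref{1.3}) force $\RE A_n(x',\cdot,0)$ and $\IM A_0(x',\cdot,0)$ to be odd in $\xi$ while $A_{nn}(x',\cdot,0)$ is even, so both potentially imaginary contributions are odd and vanish on integration over the symmetric interval $(-1/2,1/2)$, leaving an $\mathds{R}$-valued potential.

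Having confirmed the structural properties, I would introduce the sesquilinear form $\ho$ on $\H^1(\mathds{R}^{n-1})$ corresponding to the differential expression of $\Ho$; by the above it is symmetric, and analogues of estimates (\ref{3.4})--(\ref{3.6}) in the half-space setting give semi-boundedness and closedness. The first representation theorem produces a self-adjoint operator associated with $\ho$, and standard interior elliptic regularity for uniformly elliptic divergence-form operators with Lipschitz coefficients identifies its domain with $\H^2(\mathds{R}^{n-1})$, so that $\Ho$ as defined in (\ref{1.14}) is self-adjoint. For the resolvent estimate, setting $u:=(\Ho-\l)^{-1}F$ gives $\|u\|_{L_2}\leqslant\dist(\l,\spec(\Ho))^{-1}\|F\|_{L_2}$ by self-adjointness, and testing $\ho(u,u)-\l\|u\|_{L_2(\mathds{R}^{n-1})}^2=(F,u)_{L_2(\mathds{R}^{n-1})}$ (real part) against the ellipticity lower bound upgrades this to $\|u\|_{\H^1(\mathds{R}^{n-1})}\leqslant C(\l)\|F\|_{L_2(\mathds{R}^{n-1})}$. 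The full $\H^2$ control then follows from the classical a priori estimate $\|u\|_{\H^2(\mathds{R}^{n-1})}\leqslant C(\|\Ho u\|_{L_2(\mathds{R}^{n-1})}+\|u\|_{\H^1(\mathds{R}^{n-1})})\leqslant C(\l)\|F\|_{L_2(\mathds{R}^{n-1})}$.
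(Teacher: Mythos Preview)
Your proposal is correct and follows essentially the same line as the paper's proof: verify the coefficient properties (reality, regularity, ellipticity), introduce the symmetric form $\ho$, invoke the first representation theorem plus elliptic regularity to get self-adjointness on $\H^2(\mathds{R}^{n-1})$, and conclude the $\H^2$ resolvent bound via standard a priori estimates (what the paper cites as the ``second fundamental inequality'' from Ladyzhenskaya--Ural'ceva).

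The one genuinely different step is the ellipticity argument for $\{A_{ij}^0\}$. The paper rewrites $\sum_{i,j} A_{ij}^0\z_i\z_j$ as an integral of $A_{nn}^{-1}\big((\mathrm{A}\z_*,\z_*)(\mathrm{A}\z',\z')-(\mathrm{A}\z',\z_*)^2\big)$ and then applies the Cauchy--Schwarz inequality for the inner product induced by $\mathrm{A}$, together with a compactness argument on the sphere $\|\z'\|=1$, to get a positive lower bound. Your Schur-complement identity is the same algebraic fact from a different angle: choosing $\z_n=-A_{nn}^{-1}\sum_k A_{nk}\z_k$ minimises the full quadratic form in $\z_n$, and the original ellipticity then gives the pointwise bound $\geqslant c_0(|\z'|^2+\z_n^2)\geqslant c_0|\z'|^2$ directly, which survives integration in $\xi$. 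Your route is slightly more efficient in that it yields the explicit constant $c_0$ without any compactness step; the paper's route makes the connection to positive-definiteness of $\mathrm{A}$ a bit more transparent. Both are standard and equivalent.

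Your treatment of the reality of $A_0^0$ (identifying the odd-in-$\xi$ pieces via (\ref{1.3}) and killing them on the symmetric interval) is exactly what the paper means when it says ``It follows from identities (\ref{1.3}), (\ref{1.13}) that functions $A_{ij}^0$, $A_0^0$ are real''; you have simply written out the one-line check that the paper leaves implicit.
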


\begin{proof}
It follows from identities (\ref{1.3}), (\ref{1.13}) that functions $A_{ij}^0$, $A_0^0$ are real and the belongings $A_{ij}^0, A_j^0\in \Hinf^1(\mathds{R}^{n-1})$, $A_0^0\in L_\infty(\mathds{R}^{d-1})$ hold true.
The coefficient $A_{nn}(x',\xi,0)$ is positive and is uniformly separated from zero by ellipticity condition (\ref{1.2}) with $\z=\z_*:=(0,\ldots,0,1)$. Let us show that similar ellipticity condition holds also for coefficients  $A_{ij}^0$.

By $\mathrm{A}$ we denote the matrix with coefficients  $A_{ij}(x',\xi,0)$ and let $\z':=(\z_1,\ldots,\z_{n-1},0)$, $\z_j\in\mathds{R}$. Then by (\ref{1.13}) and (\ref{1.2}) we get
\begin{equation}\label{4.7}
\sum\limits_{i,j=1}^{n-1} A_{ij}^0\z_i \z_j= \int\limits_{-1/2}^{1/2} \frac{1}{A_{nn}(x',\xi,0)} \big((\mathrm{A}\z_*,z_*)_{\mathds{R}^n} (\mathrm{A}\z',\z')_{\mathds{R}^n} - (\mathrm{A}\z',\z_*)_{\mathds{R}^{n-1}}^2\big)\di \xi.
\end{equation}
Due to condition (\ref{1.3}) the form $(\mathrm{A}\cdot,\cdot)_{\mathds{R}^n}$ can be employed as an equivalent scalar product in $\mathds{R}^n$ and thus by Cauchy-Schwarz inequality we obtain
\begin{equation*}
(\mathrm{A}\z',\z_*)_{\mathds{R}^n}^2< (\mathrm{A}\z',\z')_{\mathds{R}^n}(\mathrm{A}\z_*,\z_*)_{\mathds{R}^n}.
\end{equation*}
This inequality is strict since vectors $\z'$ and $\z_*$ are noncollinear. Therefore,
\begin{equation*}
\min\limits_{\|\z'\|_{\mathds{R}^{n-1}}=1} \big((\mathrm{A}\z',\z')_{\mathds{R}^n}(\mathrm{A}\z_*,\z_*)_{\mathds{R}^n}
-(\mathrm{A}\z',\z_*)_{\mathds{R}^n}^2\big)\geqslant C>0.
\end{equation*}
By (\ref{4.7}) it yields the desired ellipticity condition.

The proof of self-adjointness for operator $\Ho$ now can be easily performed by analogy with the proof of  $m$-sectoriality for operator $\He$ in previous section. The analogue of form $\he$ is
\begin{align*}
\ho(u,v):=&\sum\limits_{i,j=1}^{n-1} \left(A_{ij}^0\frac{\p u}{\p x_j},\frac{\p u}{\p x_i}\right)_{L_2(\mathds{R}^{n-1})} + \sum\limits_{j=1}^{n-1} \left(A_j^0\frac{\p u}{\p x_j},v\right)_{L_2(\mathds{R}^{n-1})}
\\
&+\sum\limits_{j=1}^{n-1} \left(u,A_j^0\frac{\p v}{\p x_j}\right)_{L_2(\mathds{R}^{n-1})} + (A_0^0u,v)_{L_2(\mathds{R}^{n-1})}.
\end{align*}
The desired estimate for the resolvent can be proven on the basis of second fundamental inequality, cf. \cite[Гл. I\!I\!I, \S 8]{Ld}.
\end{proof}

We proceed to the proof of Theorem~\ref{th2.2}. Denote $f\in L_2(\Om^\e)$, ${\l\in \mathds{C}\setminus (\mathds{K}\cup \spec(\Ho))}$. We let
\begin{align*}
&
F_\e:=\e^{-1/2}\mathcal{Q}_\e f, && F_\bot^\e:=\mathcal{Q}_\bot^\e f, && u^\e:=(\He-\l)^{-1}f,  \\ &U^\e:=(\He-\l)^{-1}F^\e,
&& u^0:=(\Ho-\l)^{-1}F^\e.
\end{align*}
It is clear that
\begin{equation}\label{4.9}
\|F^\e\|_{L_2(\Om^\e)}^2+\|F_\bot^\e\|_{L_2(\Om^\e)}^2=\|f\|_{L_2(\Om^\e)}^2,
\end{equation}
and by Lemma~\ref{lm4.1}
\begin{equation}\label{4.10}
\|u^\e-U^\e\|_{\H^1(\Om^\e)}\leqslant \e C(\l)\|f\|_{L_2(\Om^\e)}.
\end{equation}
Hence, it is sufficient to estimate the norm of the difference $U^\e-u^0$. In order to do it, we introduce an additional corrector, which will play the key role. In fact, this corrector is the second term in the asymptotic expansion of function $U^\e$, if the latter is constructed on the basis of multiscale method. Namely, we  let
\begin{align*}
&w(x',\xi,\e):=-\sum\limits_{i,j=1}^{n-1} \frac{\p u^0}{\p x_j}(x') \int\limits_{0}^{\xi} \frac{A_{nj}(x',t,\e)}{A_{nn}(x',t,\e)}\di t - u^0(x') \int\limits_{0}^{\xi} \overline{A}_n(x',t,\e)\di t
\\
&\hphantom{w(x',\xi,\e):=}-\iu \a^\e(x') u^0(x') \int\limits_{0}^{\xi} \frac{\di t}{A_{nn}(x',t,\e)},
\\
&w^\e(x):=w\left(x',\frac{x_n}{\e},\e\right)\equiv (\mathcal{W}^\e u^0)\left(x',\frac{x_n}{\e},\e\right).
\end{align*}
It is clear that $w^\e\in\H^1(\Om^\e)$ for each $\e\in[0,\e_0]$. Function $w^\e$ is the above mentioned corrector and in what follows our aim is to estimate the norm of difference of functions $v^\e(x):=U^\e(x)-W^\e(x)$, $W^\e(x):=u^0(x')-\e w^\e(x)$.

We first observe that function $\om$ solves the equation
\begin{equation}\label{4.16}
A_{nn}\frac{\p \om}{\p \xi}+\sum\limits_{j=1}^{n-1} A_{nj} \frac{\p u^0}{\p x_j} + (\overline{A}_n+\iu \a) u^0=0,
\quad  (x',\xi,\e)\in\overline{\Pi}.
\end{equation}

It follows from the definition of functions $U^\e$ and the proofs of Theorem~\ref{th2.1} and of Lemma~\ref{lm4.1} that this function satisfies the integral identity
\begin{equation}\label{4.12}
\he(U^\e,v)-\l(U^\e,v)_{L_2(\Om^\e)}=(F^\e,v)_{L_2(\Om^\e)}
\end{equation}
for each $v\in\H^1(\Om^\e)$. And Lemma~\ref{lm4.2} together with the smoothness of the coefficients of operator $\Ho$ implies the relation
\begin{equation}\label{4.13}
F_\e=\left(-\sum\limits_{i,j=1}^{n-1}\frac{\p\hphantom{x}}{\p x_i} A_{ij}^0 \frac{\p\hphantom{x}}{\p x_j}  + \sum\limits_{j=1}^{n-1} \left( A_j^0 \frac{\p\hphantom{x}}{\p x_j} - \frac{\p\hphantom{x}}{\p x_j} \overline{A_j^0}\right)  + (A_0^0-\l) \right)u^0.
\end{equation}

We let  $v=v^\e$ in (\ref{4.12}) and employ identity $U^\e=v^\e+u^0+\e w^\e$. Then we get
\begin{equation}\label{4.17}
\he(v^\e,v^\e)-\l\|v^\e\|_{L_2(\Om^\e)}=(F^\e,v^\e)_{L_2(\Om^\e)} -\he(W^\e,v^\e)+\l(W^\e,v^\e)_{L_2(\Om^\e)}.
\end{equation}
The main idea of obtaining estimate for $v^\e$ is that first we transform the right hand side of the latter identity to a more convenient form and then we estimate it by a small quantity. It will imply the estimate for $v^\e$.

Let us transform the right hand side. Integrating by parts, we have
\begin{align}\label{4.8}
&
\he(u^0,v^\e)-\l(u^0,v^\e)_{L_2(\Om^\e)}=(g_1^\e+g_2^\e,v^\e)_{L_2(\Om^\e)}
 + \bb\left(\frac{\p u^0}{\p\nu^\e}+\iu\a^\e u^0,v^\e\right),
\\
&g_1^\e:=-\sum\limits_{i,j=1}^{n-1}\frac{\p\hphantom{x}}{\p x_i} A_{ij}^\e \frac{\p u^0}{\p x_j}  + \sum\limits_{j=1}^{n-1} \left( A_j^\e \frac{\p\hphantom{x}}{\p x_j} - \frac{\p\hphantom{x}}{\p x_j} \overline{A_j^\e}\right) u^0 + (A_0^\e-\l)u^0,\nonumber
\\
&g_2^\e:= -\sum\limits_{j=1}^{n-1}\frac{\p A_{nj}^\e}{\p x_n} \frac{\p u^0}{\p x_j}  - \frac{\p \overline{A_n^\e}}{\p x_n}u^0.
\nonumber
\end{align}
We integrate by parts in the same way,
\begin{equation}\label{4.11}
\begin{aligned}
&\e \sum\limits_{i=1}^{n} \left(A_{in}^\e\frac{\p\om^\e}{\p x_n}, \frac{\p v^\e}{\p x_i}\right)_{L_2(\Om^\e)}
=  -\e\sum\limits_{i=1}^{n} \left(\frac{\p\hphantom{x}}{\p x_i} A_{in}^\e \frac{\p w^\e}{\p x_n}, v^\e\right)_{L_2(\Om^\e)}
+
\e\bb\left(A_{nn}^\e\frac{\p \om^\e}{\p x_n},v^\e\right),
\\
&\iu\e\bb(\a^\e w^\e,v^\e)=\iu\e \left(\a^\e w^\e, \frac{\p v^\e}{\p x_n}\right)_{L_2(\Om^\e)} + \iu\e \left(\a^\e\frac{\p w^\e}{\p x_n}, v^\e\right)_{L_2(\Om^\e)}.
\end{aligned}
\end{equation}
By (\ref{4.16}) we get
\begin{align*}
&\e A_n^\e\frac{\p\om^\e}{\p x_n}
=-\frac{
A_n^\e}{A_{nn}^\e}
\left(\sum\limits_{j=1}^{n-1} A_{nj}^\e \frac{\p u^0}{\p x_j} + (\overline{A_n^\e}+\iu \a^\e) u^0\right),
\\
-&\e\sum\limits_{i=1}^{n} \frac{\p\hphantom{x}}{\p x_i} A_{in}^\e \frac{\p w^\e}{\p x_n}= \sum\limits_{i=1}^{n} \frac{\p\hphantom{x}}{\p x_i} \frac{A_{in}^\e}{A_{nn}^\e} \left( \sum\limits_{j=1}^{n-1} A_{nj}^\e \frac{\p u^0}{\p x_j} + (\overline{A_n^\e}+\iu \a^\e) u^0\right).
\end{align*}
The latter identities and (\ref{4.8}), (\ref{4.11}), (\ref{4.16}) imply
\begin{align}
&\he(W^\e,v^\e)-\l(W^\e,v^\e)=\e G_1^\e(w^\e,v^\e)+G_2^\e(u^0,v^\e)+G_3^\e(u^0,v^\e),
\label{4.15}
\\
&G_1^\e(w^\e,v^\e):=\sum\limits_{i=1}^{n-1}\sum\limits_{j=1}^{n} \left(A_{ij}^\e \frac{\p w^\e}{\p x_i},\frac{\p v^\e}{\p x_j}\right)_{L_2(\Om^\e)}+ \sum\limits_{j=1}^{n-1} \left( A_j^\e \frac{\p w^\e}{\p x_j}, v^\e\right)_{L_2(\Om^\e)} \nonumber
\\
&\hphantom{G_1^\e(w^\e,v^\e)=}+ \sum\limits_{j=1}^{n-1} \left( w^\e, A_j^\e \frac{\p v^\e}{\p x_j}\right)_{L_2(\Om^\e)} + ((A_0^\e-\l_0)w^\e,v^\e)_{L_2(\Om^\e)} 
+\iu\e \left(\a^\e w^\e,\frac{\p v^\e}{\p x_n}\right)_{L_2(\Om^\e)}, \nonumber
\\
&G_2^\e(u^0,v^\e):=\iu\sum\limits_{j=1}^{n-1} \left(u^0\frac{\p\hphantom{x}}{\p x_j} \left(\a^\e\frac{\p A_{jn}^\e}{A_{nn}^\e}\right),v^\e\right)_{L_2(\Om^\e)}\nonumber
\\
&G_3^\e(u^0,v^\e):=-\sum\limits_{i,j=1}^{n-1}\left(
\frac{\p\hphantom{x}}{\p x_i} B_{ij} \frac{\p u^0}{\p x_j} ,v^\e\right)_{L_2(\Om^\e)} + \sum\limits_{j=1}^{n-1} \left( B_j \frac{\p u^0}{\p x_j},v^\e\right)_{L_2(\Om^\e)} \nonumber
\\
&\hphantom{G_3^\e(u^0,v^\e)=}- \sum\limits_{j=1}^{n-1} \left( \frac{\p\hphantom{x}}{\p x_j} \overline{B_j}  u^0,v^\e\right)_{L_2(\Om^\e)} + \left((B_0-\l)u^0,v^\e\right)_{L_2(\Om^\e)}, \nonumber
\end{align}
where $B_{ij}^\e(x)=B_{ij}(x',x_n\e^{-1},\e)$,
$B_j^\e(x)=B_j(x',x_n\e^{-1},\e)$,
$B_0^\e(x)=B_0(x',x_n\e^{-1},\e)$,
${B_{ij}=B_{ij}(x',\xi,\e)}$, $B_j=B_j(x',\xi,\e)$, $B_0=B_0(x',\xi,\e)$,
\begin{equation}
\begin{aligned}
&B_{ij}:=A_{ij}-\frac{A_{in} A_{nj} }{A_{nn}},
\quad B_j:=A_j- \frac{A_n A_{nj}}{A_{nn}},
\\
&B_0:=A_0+ \frac{\a^2}{A_{nn}}  -\iu \a  \frac{A_n+\overline{A_n}}{A_{nn}}-\frac{|A_n|^2}{A_{nn}}.
\end{aligned}\label{4.22}
\end{equation}
Immediately from the definition of function $w^\e$ and Lemma~\ref{lm4.2} it follows the estimate
\begin{equation}\label{4.19}
|G^1_\e(w^\e,v^\e)|\leqslant \e C(\l)\|f\|_{L_2(\Om^\e)} \|v^\e\|_{\H^1(\Om^\e)}.
\end{equation}

We let $v^\e_\bot:=\mathcal{Q}^\e_\bot v^\e$. By conditions  (\ref{1.3}) the identities hold
\begin{equation*}
\int\limits_{-\frac{\e}{2}}^{\frac{\e}{2}} \frac{A_{jn}\left(x',\frac{x_n}{\e},\e\right)}{A_{nn}\left(x',\frac{x_n}{\e},\e\right)}\di x_n=0 \quad \text{for each} \quad x'\in \mathds{R}^{n-1},\quad \e\in[0,\e_0].
\end{equation*}
Since functions $\a^\e$ and $u^0$ are independent
of $\xi$, we have $u^0\frac{\displaystyle\p\hphantom{x}}{\displaystyle\p x_j}\a^\e\frac{\displaystyle A_{jn}^\e}{\displaystyle A_{nn}^\e}\in L^\e_\bot$. Then quantity $G_2^\e$ can be rewritten as
\begin{equation*}
G_2^\e(u^0,v^\e)= \iu\sum\limits_{j=1}^{n-1} \left(u^0\frac{\p\hphantom{x}}{\p x_j} \left(\a^\e\frac{\p A_{jn}^\e}{A_{nn}^\e}\right),v^\e_\bot\right)_{L_2(\Om^\e)}
\end{equation*}
that by Lemma~\ref{lm4.2} and inequality (\ref{4.4a}) yield the estimate
\begin{equation}\label{4.20}
|G_2^\e(u^0,v^\e)|\leqslant \e C(\l) \|f\|_{L_2(\Om^\e)} \|v^\e\|_{\H^1(\Om^\e)}.
\end{equation}
We let
\begin{align*}
G_\e^4(u^0,v^\e):=&-\sum\limits_{i,j=1}^{n-1}\left(
\frac{\p\hphantom{x}}{\p x_i} B_{ij}^0 \frac{\p u^0}{\p x_j} ,v^\e\right)_{L_2(\Om^\e)} + \sum\limits_{j=1}^{n-1} \left( B_j^0  \frac{\p u^0}{\p x_j},v^\e\right)_{L_2(\Om^\e)} \nonumber
\\
& - \sum\limits_{j=1}^{n-1} \left( \frac{\p\hphantom{x}}{\p x_j} \overline{B_j^0}   u^0,v^\e\right)_{L_2(\Om^\e)} + \left((B_0^0 -\l)u^0,v^\e\right)_{L_2(\Om^\e)}, \nonumber
\end{align*}
where
\begin{equation}\label{4.23}
B_{ij}^0(x,\e):=B_{ij}\left(x',\frac{x_n}{\e},0\right),\quad
B_j^0(x,\e):=B_j\left(x',\frac{x_n}{\e},0\right), \quad
B_0^0(x,\e):=B_0\left(x',\frac{x_n}{\e},0\right).
\end{equation}
Then the smoothness and the boundedness of functions $A_{ij}$, $A_j$, $A_0$ and of their derivatives and Lemma~\ref{lm4.2} yield
\begin{equation}\label{4.21}
|G^\e_4(u^0,v^\e)-G^\e_3(u^0,v^\e)|\leqslant \eta(\e) C(\l)\|f\|_{L_2(\Om^\e)} \|v^\e\|_{L_2(\Om^\e)}.
\end{equation}

It follows immediately from definition (\ref{1.13}) of functions $A_{ij}^0$, $A_j^0$, $A_0^0$ and definition (\ref{4.22}), (\ref{4.23}) of functions $B_{ij}^0$, $B_j^0$, $B_0^0$ that
\begin{equation*}
\int\limits_{-\frac{\e}{2}}^{\frac{\e}{2}} \big(A_\flat^0(x')-B_\flat^0(x,\e)\big)\di x_n=0,\quad \flat=ij,\quad \flat=j,\quad \flat=0.
\end{equation*}
These identities and Lemma~\ref{lm4.2} yield that the relation
\begin{equation*}
(F_\e,v^\e)_{L_2(\Om^\e)}-G_\e^4(u^0,v^\e)=(g^\e_3,v^\e)_{L_2(\Om^\e)}
\end{equation*}
is satisfied, where function $g^\e_3$ belongs to space $L^\e_\bot$ and obeys the estimate
\begin{equation*}
\|g^\e_3\|_{L_2(\Om^\e)}\leqslant C(\l)\|f\|_{L_2(\Om^\e)}.
\end{equation*}
By analogy with the deducing of (\ref{4.20}), one can check easily the inequality
\begin{equation*}
|(F_\e,v^\e)_{L_2(\Om^\e)}-G_\e^4(u^0,v^\e)|\leqslant  (\e+\eta(\e))  C(\l)\|f\|_{L_2(\Om^\e)} \|v^\e\|_{L_2(\Om^\e)}.
\end{equation*}
By this estimate and (\ref{4.17}), (\ref{4.15}), (\ref{4.19}), (\ref{4.20}), (\ref{4.21}) we conclude that the right hand side in (\ref{4.17}) is estimated by the quantity $(\e+\eta(\e)) C(\l)\|f\|_{L_2(\Om^\e)}\|v^\e|_{\H^1(\Om^\e)}$. Now it is sufficient to reproduce the arguments in the proof of Lemma~\ref{lm4.1} to obtain the required estimate for $v^\e$:
\begin{equation*}
\|v^\e\|_{\H^1(\Om^\e)}\leqslant (\e+\eta(\e))  C(\l)\|f\|_{L_2(\Om^\e)}.
\end{equation*}
We also observe one more obvious inequality implied directly from the definition of function $w^\e$:
\begin{equation*}
\|w^\e\|_{L_2(\Om^\e)}\leqslant \e C(\l)\|f\|_{L_2(\Om^\e)}.
\end{equation*}
The last two estimates and (\ref{4.10}) imply the statement of Theorem~\ref{th2.2}.

\section{Convergence of spectrum}

This section is devoted to the proof of Theorem~\ref{th2.3}.

Let $\l$ lie in a compact set in the complex plane and
$\mu\in\mathds{C}$ be a fixed number separated by the distance $1$ from the set $\mathds{K}\cup\spec(\Ho)$. Consider the equation
\begin{equation}\label{5.1}
(\He-\l)u=f,\quad f\in L_2(\Om^\e)
\end{equation}
and let us study its solvability. We rewrite it as
\begin{equation*}
(\He-\mu+\mu-\l)u=f
\end{equation*}
and apply operator $(\He-\mu)^{-1}$ which is well-defined by Theorem~\ref{th2.2}. Denoting ${\mathcal{T}^\e_1:=(\He-\mu)^{-1}-(\Ho-\mu)^{-1}\mathcal{Q}^\e\oplus0}$, we get
\begin{equation}\label{5.2}
u+(\mu-\l)(\Ho-\mu)^{-1}\mathcal{Q}^\e u\oplus 0 + (\mu-\l) \mathcal{T}^\e_1 u=f^\e_1,\quad f^\e_1:=(\He-\mu)^{-1}f.
\end{equation}
By Theorem~\ref{th2.2} the norm of operator $\mathcal{T}^\e_1: L_2(\Om^\e)\to L_2(\Om^\e)$ tends to zero as $\e\to+0$. This is why for all sufficiently small $\e$  the operator $\I+(\mu-\l)\mathcal{T}^\e_1$ is invertible and
\begin{align}
&u=\mathcal{T}^\e_2(\l) f^\e_1-(\mu-\l) \mathcal{T}^\e_2(\l) (\Ho-\mu)^{-1}\mathcal{Q}^\e u\oplus 0,\label{5.3}
\\
&\mathcal{T}^\e_2(\l):=(\I+(\mu-\l)\mathcal{Q}^\e\mathcal{T}^\e_1)^{-1}.\nonumber
\end{align}
This identity means that to solve equation (\ref{5.1}), it is sufficient to find function $\mathcal{Q}^\e u$. We also note that operator $\mathcal{T}^\e_2(\l)$ is holomorphic w.r.t. $\l$. The derivative of this operator w.r.t. $\l$ is as follows,
\begin{equation}\label{5.12}
\frac{\p \mathcal{T}^\e_2}{\p\l}(\l)= (\I+(\mu-\l)\mathcal{Q}^\e\mathcal{T}^\e_1)^{-1}
\mathcal{Q}^\e\mathcal{T}^\e_1
(\I+(\mu-\l)\mathcal{Q}^\e\mathcal{T}^\e_1)^{-1},
\end{equation}
and its norm tends to zero as $\e\to+0$.

We apply operator $\mathcal{Q}^\e$ to equation (\ref{5.2}) and substitute then formula (\ref{5.3}) into the definition of $\mathcal{T}^\e_1$ and employ then easily checked identities
\begin{equation*}
\I+(\mu-\l) (\Ho-\mu)^{-1}=(\Ho-\l)(\Ho-\mu)^{-1}, \quad
(\Ho-\mu)^{-1}\mathcal{Q}^\e=\mathcal{Q}^\e(\Ho-\mu)^{-1}\mathcal{Q}^\e.
\end{equation*}
We then obtain
\begin{align}
&(\Ho-\l+\mathcal{T}^\e_3(\l) )(\Ho-\mu)^{-1}\mathcal{Q}^\e u=f^\e_2,\label{5.4}
\\
&f^\e_2:=\big(\I-(\mu-\l) \mathcal{Q}^\e \mathcal{T}_\e^1(\l)\mathcal{T}_\e^2(\l)
\big)\mathcal{Q}^\e f_\e^1,\nonumber
\\
&\mathcal{T}^\e_3(\l):=-(\mu-\l)^2 \mathcal{Q}^\e \mathcal{T}^\e_1 \mathcal{T}^\e_2(\l)\mathcal{Q}^\e.\nonumber
\end{align}
We consider the obtained identity as an equation for $(\Ho-\mu)^{-1} \mathcal{Q}^\e u$. It is equivalent to the original equation  (\ref{5.1}), since once we find  $(\Ho-\mu)^{-1} \mathcal{Q}^\e u$, by formula (\ref{5.3}) we can recover solution to equation (\ref{5.1}).

We note that identifying spaces $L_2(\mathds{R}^{n-1})$ and $L^\e$, for each $v\in L_2(\mathds{R}^{n-1})$ we get the obvious identity $\|\mathcal{Q}^\e v\|_{L_2(\Om^\e)}=\e^{1/2}\|v\|_{L_2(\mathds{R}^{n-1})}$. By the definition of operator $\mathcal{T}^\e_3(\l): L_2(\mathds{R}^{n-1})\to L_2(\mathds{R}^{n-1})$ it yields that its norm tends to zero as $\e\to+0$. Moreover, this operator is holomorphic w.r.t.  $\l$ and by (\ref{5.12}) the norm of its derivative w.r.t.  $\l$ vanishes in the limit $\e\to+0$.

If now $\l$ is separated from the spectrum of operator $\Ho$ for all sufficiently small $\e$, equation (\ref{5.4}) is uniquely solvable
\begin{equation}\label{5.5}
(\Ho-\mu)^{-1}\mathcal{Q}^\e u=(\Ho-\l+\mathcal{T}_3^\e(\l))^{-1}f_2^\e.
\end{equation}
Therefore, the spectrum of operator $\He$ converges to that of operator $\Ho$ in the sense as it was stated in Theorem~\ref{th2.3}.

Let $\l_0$  be an isolated $m$-multiple eigenvalue of operator $\Ho$, $\phi_1$,~\ldots, $\phi_m$ be the associated eigenfunctions orthonormalized in $L_2(\mathds{R}^{n-1})$.  We let $f=0$ in (\ref{5.1}), i.e., we shall consider the eigenvalue equation for the perturbed operator. For its study we shall make use of the modified Birman-Schwinger principle from \cite{G}, \cite{MSb}.

For $\l$ close to $\l_0$ the representation \cite[Гл. V, \S 3.5]{K}
\begin{equation}\label{5.6}
(\Ho-\l)^{-1}=\sum\limits_{j=1}^{m} \frac{(\cdot,\phi_j)_{L_2(\mathds{R}^{n-1})}}{\l_0-\l}\phi_j + \mathcal{T}_4(\l)
\end{equation}
holds true, where operator $\mathcal{T}_4(\l)$ acts from  $L_2(\mathds{R}^{n-1})$ into the subset $\H^2(\mathds{R}^{n-1})$ comprising the functions orthogonal to  $\phi_1$, \ldots, $\phi_m$ in $L_2(\mathds{R}^{n-1})$. Moreover, operator $\mathcal{T}_4(\l)$ is holomorphic w.r.t. $\l$ in a sufficiently small neighborhood of $\l_0$. We denote
\begin{equation}\label{5.8}
U:=(\Ho-\l)^{-1}\mathcal{Q}^\e u
\end{equation}
and invert the operator $(\Ho-\l)$ in (\ref{5.4}) taking into consideration (\ref{5.6}):
\begin{equation*}
(\I+\mathcal{T}_4(\l)\mathcal{T}_3^\e(\l))U+\sum\limits_{j=1}^{m} \frac{(\mathcal{T}^\e_3(\l)U,\phi_j)_{L_2(\mathds{R}^{n-1})}}{\l_0-\l} \phi_j=0.
\end{equation*}
Since the operator $\mathcal{T}^\e_3(\l)$ is small and operator $\mathcal{T}_4(\l)$ is holomorphic, operator $(\I+\mathcal{T}_4(\l) \mathcal{T}^\e_3(\l))$ is invertible and
\begin{equation}\label{5.7}
U+
\sum\limits_{j=1}^{m} \frac{(\mathcal{T}^\e_3(\l)U,\phi_j)_{L_2(\mathds{R}^{n-1})}}{\l_0-\l} (\I+\mathcal{T}_4(\l)\mathcal{T}_3^\e(\l))^{-1}\phi_j=0.
\end{equation}
We let
\begin{equation}\label{5.9}
Z=(z_1,\ldots,z_m)^t,\quad z_j:=(\mathcal{T}^\e_3(\l)U,\phi_j)_{L_2(\mathds{R}^{n-1})}.
\end{equation}
As it follows from (\ref{5.7}), knowing quantities $z_j$, one can determine function $U$ and solve then equation (\ref{5.1}) with $f=0$ by means of (\ref{5.3}), (\ref{5.8}). In order to determine vector $z$, we apply operator $\mathcal{T}^\e_2(\l)$ to equation (\ref{5.7}) and calculate  then the scalar product with $\phi_i$ in $L_2(\mathds{R}^{n-1})$. Then we obtain the matrix equation
\begin{equation}\label{5.13}
\big((\l_0-\l)\mathrm{E}_m+\mathrm{B}^\e(\l)\big)Z=0,
\end{equation}
where $\mathrm{E}_m$ is the unit $m\times m$ matrix, $\mathrm{B}^\e(\l)$ is the matrix with the components
\begin{equation*}
A^\e_{ij}(\l):=\big(\mathcal{T}_3^\e(\l)
(\I+\mathcal{T}_4(\l)\mathcal{T}_3^\e(\l))^{-1}\phi_i,
\phi_j\big)_{L_2(R^{n-1})}.
\end{equation*}
The points at which the matrix $(\l_0-\l)\mathrm{E}_m+\mathrm{B}^\e(\l)$ is non-invertible are exactly the eigenvalues of operator $\He$. Indeed, if  $\l$ is one of such points, equation (\ref{5.13}) has finitely many linear independent solutions. By formulae (\ref{5.7}), (\ref{5.9}), (\ref{5.3}) with $f^\e_1=0$,  each such solution is associated with an eigenfunction of operator $\He$:
\begin{equation}\label{5.16}
u=\mathcal{T}^\e_2(\l)U,
\quad
U=\sum\limits_{j=1}^{m}  z_j (\I+\mathcal{T}_4(\l)\mathcal{T}_3^\e(\l))^{-1}\phi_j.
\end{equation}
The factors  $1/(\l-\l_0)$ in the formula for $U$ and  $(\mu-\l)$ in that for $u$  can be omitted since the eigenfunction is determined up to a multiplicative constant. It is also easy to make sure that linearly independent vectors $z$ are associated with linearly independent eigenfunctions of operator $\He$. Thus, the multiplicity of an eigenvalue $\l$ of operator $\He$ coincides with the number of linearly independent solutions to equation (\ref{5.13}).

The properties of operator $\mathcal{T}^\e_3(\l)$ and $\mathcal{T}_4(\l)$ imply that the entries of matrix $\mathrm{B}^\e(\l)$ are holomorphic w.r.t. $\l$. Moreover, these entries and their derivatives w.r.t. $\l$ tends to zero as
$\e\to+0$ uniformly in $\l$ in a small neighborhood of point $\l_0$. We denote
\begin{equation*}
R^\e(\l):= \det(\l-\l_0-\mathrm{B}^\e(\l)).
\end{equation*}

\begin{lemma}\label{lm5.1}
The function $\l\mapsto R^\e(\l)$ has exactly $m$ zeroes (counting the orders) converging to $\l_0$ as $\e\to+0$.
\end{lemma}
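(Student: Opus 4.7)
The plan is to treat $R^\e(\l)$ as a holomorphic perturbation of the function $(\l-\l_0)^m$ and apply Rouché's theorem on a small circle about $\l_0$.

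First I would record the two ingredients that have already been established about $\mathrm{B}^\e(\l)$: its entries $A_{ij}^\e(\l)$ are holomorphic in $\l$ in a small neighborhood of $\l_0$, and they converge to $0$ uniformly in $\l$ on that neighborhood as $\e\to+0$. Consequently $R^\e(\l)=\det\bigl((\l-\l_0)\mathrm{E}_m-\mathrm{B}^\e(\l)\bigr)$ is holomorphic in $\l$ in a neighborhood of $\l_0$ for each sufficiently small $\e$. Because $\l_0$ is an isolated eigenvalue of $\Ho$, I choose $\d>0$ so small that the closed disc $\overline{D_\d(\l_0)}:=\{\l:|\l-\l_0|\leqslant\d\}$ contains no other point of $\spec(\Ho)$; in particular, on $\overline{D_\d(\l_0)}$ the operator $\mathcal{T}_4(\l)$ from (\ref{5.6}) is holomorphic and uniformly bounded, so the representation of $\mathrm{B}^\e(\l)$ is legitimate throughout this disc.

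Next I would expand the determinant. The quantity $R^\e(\l)$ is, by multilinearity of the determinant, the sum of $(\l-\l_0)^m$ and finitely many terms, each of which is a product of powers of $(\l-\l_0)$ and at least one entry $A_{ij}^\e(\l)$. Since the $A_{ij}^\e(\l)$ tend to $0$ uniformly on $\overline{D_\d(\l_0)}$ while the factors $(\l-\l_0)^k$ stay bounded by $\d^k$, we obtain
\begin{equation*}
\sup_{|\l-\l_0|=\d}\bigl|R^\e(\l)-(\l-\l_0)^m\bigr|\longrightarrow 0\quad\text{as}\quad\e\to+0.
\end{equation*}
On the circle $|\l-\l_0|=\d$ we have $|(\l-\l_0)^m|=\d^m>0$, so for all sufficiently small $\e$ the strict inequality $|R^\e(\l)-(\l-\l_0)^m|<|(\l-\l_0)^m|$ holds on this circle.

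Then I would invoke Rouché's theorem: the two holomorphic functions $R^\e(\l)$ and $(\l-\l_0)^m$ have the same number of zeros, counted with multiplicities, inside $D_\d(\l_0)$. The latter has exactly $m$ zeros (all located at $\l_0$), hence $R^\e$ has exactly $m$ zeros inside $D_\d(\l_0)$ counting orders. Since $\d>0$ can be taken arbitrarily small (and the threshold for $\e$ depends on $\d$), all these zeros must converge to $\l_0$ as $\e\to+0$, which is exactly the claim of Lemma~\ref{lm5.1}. I do not expect any substantive obstacle here: the only step that could appear delicate is the uniform convergence $\mathrm{B}^\e(\l)\to 0$ on $\overline{D_\d(\l_0)}$, but this has already been extracted from the properties of $\mathcal{T}^\e_3(\l)$ and $\mathcal{T}_4(\l)$ discussed immediately before the statement.
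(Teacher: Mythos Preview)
Your proof is correct and follows essentially the same route as the paper: write $R^\e(\l)=(\l-\l_0)^m+R^\e_1(\l)$ with $R^\e_1$ holomorphic and uniformly vanishing as $\e\to+0$, then apply Rouch\'e's theorem on a small circle about $\l_0$. The paper states this in two lines, while you supply the determinant expansion justifying the smallness of $R^\e_1$, but the argument is the same.
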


\begin{proof}
It is clear that
\begin{equation*}
R^\e(\l)=(\l-\l_0)^m+R^\e_1(\l),
\end{equation*}
where function $R^\e_1(\l)$ is holomorphic w.r.t. $\l$ in a small neighborhood of point $\l_0$ and tends to zero as $\e\to+0$ uniformly in $\l$. Employing this representation and applying Rouch\'e theorem, we complete the proof.
\end{proof}

Let $\l^\e$ be a zero of function $R^\e(\l)$ of order $k(\e)$ described in Lemma~\ref{lm5.1}. Equation (\ref{5.13}) has
$q(\e)$  linearly independent solutions associated with this zero and $q(\e)$ is also the multiplicity of $\l^\e$ regarded as an eigenvalues of operator l$\He$.
Let us prove that this multiplicity of  $\l^\e$ coincides with its order once we regard it as a zero of function $R^\e(\l)$.

\begin{lemma}\label{lm5.2}
For all sufficiently small $\e$ and all zeroes of function  $R^\e(\l)$ the identity $p(\e)=q(\e)$ holds true.
\end{lemma}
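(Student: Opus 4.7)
The plan is to combine a Riesz projector argument, made possible by the uniform resolvent convergence of Theorem~\ref{th2.2}, with the matrix reduction (\ref{5.13}).

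First, I would choose $\d>0$ so small that $\overline{D_\d(\l_0)}\cap\spec(\Ho)=\{\l_0\}$ and set $\g:=\p D_\d(\l_0)$. By Theorem~\ref{th2.3} and Lemma~\ref{lm5.1}, for all sufficiently small $\e$ every eigenvalue of $\He$ inside $\g$ is one of the zeros $\l_j^\e$ of $R^\e(\l)$, and $\sum_j k_j(\e)=m$. Introduce the Riesz projectors
\begin{equation*}
\mathcal{P}^\e:=\frac{1}{2\pi\iu}\oint_\g(\l-\He)^{-1}\di\l,\qquad \mathcal{P}^0:=\frac{1}{2\pi\iu}\oint_\g(\l-\Ho)^{-1}\di\l.
\end{equation*}
The estimate (\ref{1.18}) applied along the compact contour $\g$ yields $\|\mathcal{P}^\e-(\mathcal{P}^0\mathcal{Q}^\e\oplus 0)\|_{L_2(\Om^\e)\to L_2(\Om^\e)}\to 0$. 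Since $\Ho$ is self-adjoint and $\mathcal{P}^0$ is the rank-$m$ orthogonal projector onto $\mathrm{span}(\phi_1,\dots,\phi_m)$, norm continuity of the rank for projectors implies $\mathrm{rank}\,\mathcal{P}^\e=m$ for all small $\e$. Hence the total algebraic multiplicity $\sum_j a_j(\e)$ of $\He$-eigenvalues inside $\g$ equals $m$.

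Next I would analyze the local Riesz projector at an individual zero $\l^\e$. Inside a small disk around $\l^\e$ excluding the other zeros, I would substitute the reduction formulas (\ref{5.3})--(\ref{5.16}) into $(\l-\He)^{-1}$, expressing the local Riesz projector as a contour integral involving the matrix resolvent $\mathrm{M}^\e(\l)^{-1}$, where $\mathrm{M}^\e(\l):=(\l_0-\l)\mathrm{E}_m+\mathrm{B}^\e(\l)$. The Gohberg--Sigal theorem for analytic matrix functions then says that the total length of Jordan chains of $\mathrm{M}^\e(\l)$ at $\l^\e$ coincides with the order $k(\e)$ of $\l^\e$ as a zero of $\det\mathrm{M}^\e(\l)=\pm R^\e(\l)$. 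Differentiating the reduction identities (\ref{5.3}), (\ref{5.7}), (\ref{5.16}) repeatedly in $\l$ gives a bijection between Jordan chains of $\mathrm{M}^\e(\l)$ and root-vector chains of $\He-\l^\e$, whence $a_j(\e)=k_j(\e)$ for every~$j$.

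Comparing with Step~1 forces $\sum_j a_j(\e)=m=\sum_j k_j(\e)$, with the equality holding termwise. Finally, semi-simplicity $q_j(\e)=a_j(\e)$ can be extracted from the fact that $\l_0$ is semi-simple (since $\Ho$ is self-adjoint), combined with the norm convergence $\mathcal{P}^\e\to\mathcal{P}^0\mathcal{Q}^\e\oplus 0$: in the limit the range of the projector is spanned by genuine eigenvectors, and a standard perturbation argument shows no Jordan blocks can open up under a small norm perturbation of a semi-simple finite-rank projector. This yields $p(\e)=q(\e)=k(\e)$, completing the proof. The principal obstacle is the bookkeeping in the middle step: carefully lifting the null-vector-to-eigenfunction bijection of (\ref{5.16}) to a bijection of Jordan chains, which requires controlling the $\l$-derivatives of the auxiliary operators $\mathcal{T}_j^\e(\l)$ (already shown small and holomorphic by (\ref{5.12}) and its analogues) and matching their orders at $\l^\e$ with those of the pencil.
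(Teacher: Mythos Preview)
Your Riesz-projector/Gohberg--Sigal route is genuinely different from the paper's, but the last step contains a real gap. The assertion that ``no Jordan blocks can open up under a small norm perturbation of a semi-simple finite-rank projector'' is false. Consider $A_\e=\bigl(\begin{smallmatrix}0&\e\\0&0\end{smallmatrix}\bigr)$ on $\mathds{C}^2$: the Riesz projector for the eigenvalue $0$ is the identity for every $\e$, the limit $A_0=0$ is semi-simple, yet $A_\e$ has a nontrivial Jordan block for all $\e\neq 0$. Norm convergence of the spectral projector controls only the range, not the nilpotent part; since $\He$ is non-self-adjoint this is precisely the phenomenon that must be excluded, and projector convergence alone cannot do it. With only your earlier steps you obtain $\sum_j q_j(\e)\leqslant\sum_j a_j(\e)=\sum_j k_j(\e)=m$, not the termwise equality $q_j=k_j$. (You flag the Jordan-chain bijection in the middle as the ``principal obstacle''; in fact it is this final semi-simplicity claim that fails.)

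The paper's argument avoids projectors and Gohberg--Sigal entirely and works purely with the $m\times m$ pencil. The crucial input, absent from your sketch, is that not only $\mathrm{B}^\e(\l)$ but also its $\l$-\emph{derivative} tends to zero (recorded via (\ref{5.12})). Writing $\mathrm{B}^\e(\l)-\mathrm{B}^\e(\l^\e)=(\l-\l^\e)\mathrm{B}_1^\e(\l)$ with $\mathrm{B}_1^\e\to 0$, one conjugates by a unitary $\mathrm{S}$ whose first $q$ columns span $\ker\big((\l^\e-\l_0)\mathrm{E}_m-\mathrm{B}^\e(\l^\e)\big)$ and factors
\[
\mathrm{S}^{-1}\big((\l-\l_0)\mathrm{E}_m-\mathrm{B}^\e(\l)\big)\mathrm{S}
=\big(\mathrm{E}_m+\mathrm{S}^{-1}\mathrm{B}_1^\e(\l)\mathrm{S}\big)\big((\l-\l^\e)\mathrm{E}_m+\mathrm{B}_2^\e(\l)\big).
\]
The first factor is invertible exactly because $\mathrm{B}_1^\e$ is small; the second has its first $q$ columns vanishing at $\l=\l^\e$, with the remaining $(m-q)\times(m-q)$ block of full rank by the rank hypothesis. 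Taking determinants gives $R^\e(\l)=(\l-\l^\e)^q R_1^\e(\l)$ with $R_1^\e(\l^\e)\neq 0$, hence $k(\e)=q(\e)$ directly. The smallness of the derivative is exactly the extra information that rules out the $2\times 2$ counterexample above and that your projector argument never invokes.
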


\begin{proof}
Let $Z_1$, \ldots, $Z_q$ be linearly independent solutions to equation (\ref{5.13}) associated with $\l=\l^\e$. Without loss of generality we choose vectors $Z_j$ orthonormalized in $\mathds{C}^m$. Since $q\leqslant m$, we complement these vectors by vectors $Z_j$, $j=q+1,\ldots,m$, in such a way that the obtained systems forms an orthonormalized basis in  $\mathds{C}^m$. By $\mathrm{S}$ we denote the matrix with the columns $Z_j$, $j=1,\ldots,m$. Since vectors $Z_j$ are orthonormalized, matrix $\mathrm{S}$  is non-degenerate and orthogonal $\mathrm{S}^{-1}=\mathrm{S}^*$.

By the aforementioned properties of matrix
$\mathrm{B}^\e(\l)$ and Hadamard lemma the representation
\begin{equation*}
\mathrm{B}^\e(\l)-\mathrm{B}^\e(\l^\e)=(\l-\l^\e) \mathrm{B}_1^\e(\l)
\end{equation*}
is valid, where matrix $\mathrm{B}_1^\e(\l)$ is holomorphic w.r.t. $\l$ in a small neighborhood of point $\l_0$, and its elements tends to zero as $\e\to+0$ uniformly in  $\l$.
Employing this representation, by straightforward  calculations
we check that
\begin{align}
&
\begin{aligned}
&\mathrm{S}^{-1}\big((\l-\l_0)\mathrm{E}_m-\mathrm{B}^\e(\l)\big)\mathrm{S}
= (\l-\l^\e)\big(\mathrm{E}_m+\mathrm{S}^{-1}\mathrm{B}^\e_1(\l)\mathrm{S}\big)\\
&+
\mathrm{S}^{-1}\big((\l^\e-\l_0)\mathrm{E}_m -\mathrm{B}^\e(\l^\e)\big)\mathrm{S}
 = \big(\mathrm{E}_m+\mathrm{S}^{-1}\mathrm{B}^\e_1(\l)\mathrm{S}\big) \big((\l-\l^\e)\mathrm{E}_m+
\mathrm{B}^\e_2\big),
\end{aligned}\label{5.14}
\\
&\mathrm{B}^\e_2(\l):=\mathrm{S}^{-1}\big(\mathrm{E}_m+ \mathrm{B}^\e_1(\l)\big)^{-1} \big((\l^\e-\l_0)\mathrm{E}_m -\mathrm{B}^\e(\l^\e)\big)\mathrm{S}\nonumber.
\end{align}
By the definition of matrix $\mathrm{S}$ matrix $\mathrm{B}^\e_2$ has a block structure
\begin{equation*}
\mathrm{B}^\e_2(\l)=
\begin{pmatrix}
0 & \mathrm{B}^\e_3(\l)
\\
0 & \mathrm{B}^\e_4(\l)
\end{pmatrix},
\end{equation*}
where the zero block in the upper left corner is of the size $q\times m$, the zero block in the left lower corner
has the size $(m-q)\times m$, while the blocks $\mathrm{B}^\e_3$ and $\mathrm{B}^\e_4$ are respectively of the size $q\times (m-q)$ and $(m-q)\times (m-q)$. By the properties of matrix $\mathrm{B}^\e_1$ it implies
\begin{gather}
R^\e(\l)=(\l-\l^\e)^q R^\e_1(\l), \label{5.15}
\\
R_\e^1(\l):=\det{}^{-1}(\mathrm{E}_m +\mathrm{S}^{-1}\mathrm{B}^\e_1(\l)\mathrm{S})\, \det\big((\l-\l^\e)\mathrm{E}_{m-q} -\mathrm{B}^\e_4(\l)\big).\nonumber
\end{gather}
Identity (\ref{5.14}) with $\l=\l^\e$ and the assumption
\begin{equation*}
\mathrm{rank}\, \big((\l^\e-\l_0)\mathrm{E}_m-\mathrm{B}^\e(\l^\e)\big)=m-q
\end{equation*}
yields that $\mathrm{rank}\,\mathrm{A}_4^\e(\l^\e)=m-q$, and hence $R_\e^1(\l^\e)\not=0$. Together with (\ref{5.15}) it completes the proof.
\end{proof}

\begin{remark}\label{rm5.1}
We note that similar lemma was proven  in \cite[Lm. 6.3]{MPAG07} within the framework on the basis of the modified Birman-Schwinger approach. At the same time, in the mentioned work the self-adjointness of both the perturbed and limiting operators was employed essentially. In the present work we succeeded to get rid of this assumption for the perturbed operator.
\end{remark}

Lemmata~\ref{lm5.1},~\ref{lm5.2} imply the second part of Theorem~\ref{th2.3} on convergence of eigenvalues. It remains to prove the reality for the eigenvalues of perturbed operator converging to $\l_0$.

Let $\l^\e$ be one of such eigenvalues, and $\psi^\e$ is the associated eigenfunction. Then $\psi^\e$ satisfies representation (\ref{5.16})  with $u$  replaced by $\psi^\e$. Normalizing nontrivial solution $Z$ to equation (\ref{5.13}) as follows
\begin{equation}\label{5.17}
\|Z\|_{\mathds{C}^m}=\e^{-1}.
\end{equation}
Then it follows from (\ref{5.16}), the definition of operators $\mathcal{T}^\e_2$, $\mathcal{T}^\e_3$, $\mathcal{T}_4$ and Theorem~\ref{th2.2} that
\begin{equation}\label{5.18}
\psi^\e=\sum\limits_{j=1}^{m} z_j\phi_j+\Odr(\e+\eta)\quad\text{in the norm of}\quad L_2(\Om^\e).
\end{equation}
Employing identities (\ref{1.9}), (\ref{1.11}) and eigenvalue equation for $\psi^\e$, by straightforward calculations we check that
\begin{equation}\label{5.19}
\begin{aligned}
0=&\big((\He-\l^\e)\psi^\e,\mathcal{P}\psi^\e\big)_{L_2(\Om^\e)}= \big(\psi^\e,(\mathcal{H}^\e_{-\a}-\overline{\l^\e})\mathcal{P}\psi^\e
\big)_{L_2(\Om^\e)}
\\
=& \big(\psi^\e,\mathcal{P}(\He-\overline{\l^\e})\psi^\e
\big)_{L_2(\Om^\e)}=(\overline{\l^\e}-\l^\e) (\psi^\e,\mathcal{P}\psi^\e)_{L_2(\Om^\e)}.
\end{aligned}
\end{equation}
It follows from the definition of operator $\mathcal{P}$ and (\ref{5.18}) that
\begin{equation}\label{5.21}
\mathcal{P}\psi^\e=\sum\limits_{j=1}^{m} z_j\phi_j+\Odr(\e+\eta)\quad\text{in the norm of}\quad L_2(\Om^\e),
\end{equation}
and thus by (\ref{5.17}), (\ref{5.18}) and the orthonormality of $\phi_j$ in $L_2(\mathds{R}^{n-1})$
\begin{equation}\label{5.20}
(\psi^\e,\mathcal{P}\psi^\e)_{L_2(\Om^\e)}=1+\Odr(\e+\eta).
\end{equation}
It yields that identity (\ref{5.19}) is possible only for real $\l^\e$. The proof of Theorem~\ref{th2.3} is complete.

\section{Asymptotic expansions: formal construction}

In the present section we provide the first part of the proof for Theorem~\ref{th2.4} which is the formal construction of asymptotic expansions for the eigenvalues and eigenfunctions of the perturbed operator. The second part of the proof, a rigorous justification and estimates for the error terms, will be given in the next section.

Let $\l^0$ be an isolated $m$-multiple eigenvalue of operator $\Ho$, and $\phi_k=\phi_k(x')$, $k=1,\ldots,m$, are the associated real-valued eigenfunctions orthonormalized in  $L_2(\mathds{R}^{n-1})$. In accordance with Theorem~\ref{th2.3}, there exist exactly $m$ eigenvalues $\l^\e_k$, $k=1,\ldots,m$, of perturbed operator converging to $\l^0$ as $\e\to+0$. We construct the asymptotics for these eigenvalues as
\begin{equation}\label{6.1}
\l_k^\e=\l^0+\sum\limits_{p=1}^{\infty} \e^p \L_k^{(p)},\quad k=1,\ldots,m,
\end{equation}
and the asymptotics of the associated eigenfunctions are constructed as
\begin{equation}\label{6.2}
\psi_k^\e(x)=\phi_k(x')+\sum\limits_{p=1}^{\infty} \e^p \phi_k^{(p)}(x',\xi),\quad k=1,\ldots,m,
\end{equation}
where $\xi=x_n\e^{-1}$ is a rescaled variable, $\L_k^{(p)}$ and $\psi_k^{(p)}$ are some numbers and functions  and to  determine them is the main aim of the formal construction. We construct the asymptotics by the multiscale method \cite{MS}.

In what follows it is convenient to regard the eigenfunctions of the perturbed operator as generalized solutions to the boundary value problem
\begin{equation}\label{6.3}
\begin{gathered}
\left( -\sum\limits_{i,j=1}^{n} \frac{\p\hphantom{x}}{\p x_i} A_{ij}^\e \frac{\p\hphantom{x}}{\p x_j} + \sum\limits_{j=1}^{n} \left(A_j^\e\frac{\p\hphantom{x}}{\p x_j}-\frac{\p\hphantom{x}}{\p x_j} \overline{A_j^\e}\right)+ A_0^\e
\right)\psi_k^\e=\l_k^\e\psi_k^\e\quad\text{in}\quad \Om^\e,
\\
\left(\frac{\p\hphantom{\nu}}{\p\nu^\e}+\iu\a^\e\right)\psi_k^\e=0\quad \text{on}\quad\p\Om^\e.
\end{gathered}
\end{equation}
We substitute series (\ref{6.1}), (\ref{6.2}) into this boundary value problem, take into consideration the dependence of functions $\phi_k^{(p)}$ on variable $\xi$, and collect then the coefficients at the like powers of $\e$. Then we obtain the recurrent system of boundary value problems
\begin{equation}\label{6.4}
\begin{aligned}
-&\frac{\p\hphantom{.}}{\p\xi}A_{nn} \frac{\p\phi_k^{(p)}}{\p\xi}+\frac{\p\hphantom{\nu}}{\p\nu^*}\frac{\p \phi_k^{(p-1)}}{\p\xi}-\frac{\p \hphantom{.} }{\p\xi}\frac{\p\phi_k^{(p-1)}}{\p\nu}
\\
&\hphantom{\frac{\p\hphantom{.}}{\p\xi}A_{nn} \frac{\p\phi_k^{(p)}}{\p\xi}}+ \mathcal{T}_5 \phi_k^{(p-2)}=\l^0\phi_k^{(p-2)}+\sum\limits_{q=1}^{p-2} \L_k^{(q)} \phi_k^{(p-q-2)}\quad\text{в}\quad\Om,
\\
& A_{nn}\frac{\p\phi_k^{(p)}}{\p\xi}+\frac{\p\phi_k^{(p-1)}}{\p\nu}=0 \quad\text{на}\quad \p\Om,\quad p\geqslant 1,
\end{aligned}
\end{equation}
where we have denoted
\begin{align*}
& \frac{\p\hphantom{\nu}}{\p\nu}:=\sum\limits_{j=1}^{n-1} A_{nj} \frac{\p\hphantom{x}}{\p x_j} + \overline{A}_n+\iu\a,
\quad \frac{\p\hphantom{\nu}}{\p\nu^*}:=-\sum\limits_{j=1}^{n-1} A_{nj} \frac{\p\hphantom{x}}{\p x_j} + A_n+\iu\a,
\\
&\mathcal{T}_5:=-\sum\limits_{i,j=1}^{n-1} \frac{\p\hphantom{x}}{\p x_i} A_{ij} \frac{\p\hphantom{x}}{\p x_j} + \sum\limits_{j=1}^{n-1} \left(A_j \frac{\p\hphantom{x}}{\p x_j}-\frac{\p\hphantom{x}}{\p x_j} \overline{A_j}\right)+ A_0,
\\
&\phi_k^{(0)}:=\phi_k,\quad \phi_k^{(-1)}:=0.
\end{align*}
In order to solve problem (\ref{6.4}), we shall make use of the following auxiliary lemma.

\begin{lemma}\label{lm6.1}
Let $F=F(x',\xi)$ be a function such that ${F(x',\cdot)\in L_2(-1/2,1/2)}$ for each $x'\in \mathds{R}^{n-1}$, $g_\pm=g_\pm(x')$ be some functions. The boundary value problem
\begin{equation}\label{6.5}
-\frac{\p\hphantom{.}}{\p\xi}A_{nn} \frac{\p\phi}{\p\xi}+F=0\quad\text{в}\quad\Om,\quad
A_{nn}\frac{\p\phi_k^{(p)}}{\p\xi}+g_\pm=0 \quad\text{при}\quad \xi=\pm1/2,
\end{equation}
is solvable if and only if
\begin{equation}\label{6.6}
\int\limits_{-\frac{1}{2}}^{\frac{1}{2}} F(x',\xi)\di \xi=g_-(x')-g_+(x')\quad \text{для всех}\quad x'\in \mathds{R}^{n-1}.
\end{equation}
There exists the unique solution to problem (\ref{6.5}) obeying condition
\begin{equation}\label{6.7}
\int\limits_{-\frac{1}{2}}^{\frac{1}{2}} \phi(x',\xi)\di \xi =0\quad \text{для всех}\quad x'\in \mathds{R}^{n-1}.
\end{equation}
This solution is given by the formula
\begin{equation}\label{6.8}
\begin{aligned}
\phi(x',\xi)=&-g_-(x') \left(\int\limits_{-\frac{1}{2}}^{\xi}\frac{\di t}{A_{nn}(x',t)}+ \int\limits_{-\frac{1}{2}}^{\frac{1}{2}} \frac{(t-\frac{1}{2})\di t}{A_{nn}(x',t)}\right)
\\
& + \left( \int\limits_{-\frac{1}{2}}^{\xi} \frac{\di t}{A_{nn}(x',t)} \int\limits_{-\frac{1}{2}}^{t} F(x',s)\di s+ \int\limits_{-\frac{1}{2}}^{\frac{1}{2}} \di t \frac{t-\frac{1}{2}}{A_{nn}(x',t)}\int\limits_{-\frac{1}{2}}^{t} F(x',s)\di s
\right).
\end{aligned}
\end{equation}
The general solution is the sum of the latter and an arbitrary function depending on $x'$ only.
\end{lemma}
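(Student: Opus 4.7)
The problem (\ref{6.5}) is a one-dimensional ODE in $\xi$ with $x'$ entering only as a parameter, so I would treat it as an elementary two-point boundary value problem with a parameter and then verify the formula by direct computation.

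First, for the necessary condition (\ref{6.6}), I would integrate the equation in (\ref{6.5}) over $\xi\in(-1/2,1/2)$, which gives
\begin{equation*}
-\Big[A_{nn}\frac{\p\phi}{\p\xi}\Big]_{\xi=-1/2}^{\xi=1/2}+\int_{-1/2}^{1/2} F(x',\xi)\di\xi=0,
\end{equation*}
and then substitute the boundary conditions $A_{nn}\p_\xi\phi|_{\xi=\pm1/2}=-g_\pm$ to obtain $g_+-g_-+\int F\di\xi=0$, which is exactly (\ref{6.6}).

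For sufficiency and the explicit formula, I would integrate the ODE once from $-1/2$ to $\xi$ and use the boundary condition at $\xi=-1/2$ to eliminate the constant, giving
\begin{equation*}
\frac{\p\phi}{\p\xi}(x',\xi)=-\frac{g_-(x')}{A_{nn}(x',\xi)}+\frac{1}{A_{nn}(x',\xi)}\int_{-1/2}^{\xi}F(x',s)\di s.
\end{equation*}
A second integration from $-1/2$ to $\xi$ yields $\phi$ up to an additive function of $x'$ alone, which is precisely the general-solution freedom stated in the lemma. At this stage I would check that the boundary condition at $\xi=+1/2$ is now automatic: it reduces to $-g_-+\int_{-1/2}^{1/2}F\di s=-g_+$, which is (\ref{6.6}). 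This establishes existence and characterizes all solutions.

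The additive function of $x'$ is uniquely fixed by the normalization (\ref{6.7}). To arrive at the specific form (\ref{6.8}), I would swap the order of integration in the correction term: for any integrable $h(t)$,
\begin{equation*}
\int_{-1/2}^{1/2}\di\xi\int_{-1/2}^{\xi}h(t)\di t=\int_{-1/2}^{1/2}(1/2-t)h(t)\di t=-\int_{-1/2}^{1/2}(t-1/2)h(t)\di t,
\end{equation*}
applied once with $h(t)=1/A_{nn}(x',t)$ and once with $h(t)=A_{nn}(x',t)^{-1}\int_{-1/2}^{t}F(x',s)\di s$. Subtracting these means from the two integrals appearing in the second integration gives exactly the two bracketed expressions in (\ref{6.8}), and by construction the resulting $\phi$ has zero mean in $\xi$ for every $x'$. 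There is no real obstacle: the only thing to watch is the sign bookkeeping in the boundary data and the Fubini swap in the normalization, so I would organize the write-up to make both visible in a single line.
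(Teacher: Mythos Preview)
Your argument is correct and is exactly the ``straightforward calculations'' the paper alludes to; the paper does not spell out a proof beyond that phrase, and your integration--normalization route is the natural way to verify (\ref{6.6})--(\ref{6.8}).
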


\noindent The statement of this lemma can be checked by straightforward calculations.

We proceed to solving problems  (\ref{6.4}). We first consider independently these problems for $p=1,2,3$, and then we construct the solutions for arbitrary $p$. We have to consider separately the cases $p=1,2,3$,  since to construct the solution for arbitrary $p$, one has to employ constructions for the cases $p=1,2,3$.

As $p=1$, problem (\ref{6.4}) casts into the form
\begin{equation*}
-\frac{\p\hphantom{.}}{\p\xi}A_{nn} \frac{\p\phi_k^{(1)}}{\p\xi} -\frac{\p \hphantom{.} }{\p\xi}\frac{\p\phi_k }{\p\nu}=0 \quad\text{in}\quad\Om,
\quad
A_{nn}\frac{\p\phi_k^{(1)}}{\p\xi}+\frac{\p\phi_k}{\p\nu}=0 \quad\text{on}\quad \p\Om.
\end{equation*}
It implies
\begin{align}
& A_{nn} \frac{\p\phi_k^{(1)}}{\p\xi}+\frac{\p\phi_k }{\p\nu}=0,
\label{6.9}
\\
&\phi_k^{(1)}=\hat{\phi}_k^{(1)}+\Phi_k^{(1)},\quad \hat{\phi}_k^{(1)}:= \mathcal{T}_6\phi_k,\label{6.10}
\end{align}
where $\Phi_k^{(1)}=\Phi_k^{(1)}(x')$ is a function which will be determined below,
\begin{align}
&(\mathcal{T}_6\phi)(x',\xi):=\sum\limits_{j=1}^{n-1} G_j(x',\xi) \frac{\p\phi}{\p x_j}(x')+G_0(x',\xi)\phi(x'),\label{6.11}
\\
&G_j(x',\xi):=-\int\limits_{-\frac{1}{2}}^{\xi} \frac{A_{nj}(x',t)\di t}{A_{nn}(x',t)} - \int\limits_{-\frac{1}{2}}^{\frac{1}{2}} \frac{t A_{nj}(x',t)}{A_{nn}(x',t)}\di t,\nonumber
\\
&G_0(x',\xi):=-\int\limits_{-\frac{1}{2}}^{\xi} \frac{\overline{A}_{n}(x',t)+\iu\a(x')}{A_{nn}(x',t)}\di t - \int\limits_{-\frac{1}{2}}^{\frac{1}{2}} \left(t-\frac{1}{2}\right) \frac{\overline{A}_n(x',t)+\iu\a(x')}{A_{nn}(x',t)}\di t.\nonumber
\end{align}
In view of identities (\ref{1.3}) it is easy to check that function $\phi_k^{(1)}$ obeys condition (\ref{6.7}).

We write down problem (\ref{6.4}) for $p=2$:
\begin{equation}\label{6.12}
\begin{aligned}
-&\frac{\p\hphantom{.}}{\p\xi}A_{nn} \frac{\p\phi_k^{(2)}}{\p\xi}+\frac{\p\hphantom{\nu}}{\p\nu^*}\frac{\p \phi_k^{(1)}}{\p\xi}-\frac{\p \hphantom{.} }{\p\xi}\frac{\p\phi_k^{(1)}}{\p\nu}
+ \mathcal{T}_5 \phi_k=\l^0\phi_k \quad\text{in}\quad\Om,
\\
& A_{nn}\frac{\p\phi_k^{(2)}}{\p\xi}+\frac{\p\phi_k^{(1)}}{\p\nu}=0 \quad\text{on}\quad \p\Om.
\end{aligned}
\end{equation}
We write the solvability condition (\ref{6.6}):
\begin{equation*}
\int\limits_{-\frac{1}{2}}^{\frac{1}{2}} \frac{\p\hphantom{\nu}}{\p\nu^*}\frac{\p \phi_k^{(1)}}{\p\xi} \di \xi - \int\limits_{-\frac{1}{2}}^{\frac{1}{2}} \frac{\p \hphantom{.} }{\p\xi}\frac{\p\phi_k^{(1)}}{\p\nu} \di\xi + \int\limits_{-\frac{1}{2}}^{\frac{1}{2}} (\mathcal{T}_5 \phi_k-\l^0\phi_k)\di\xi=-\frac{\p\phi_k^{(1)}}{\p\nu} \bigg|^{\xi=\frac{1}{2}}_{\xi=-\frac{1}{2}},
\end{equation*}
which yields
\begin{equation*}
\int\limits_{-\frac{1}{2}}^{\frac{1}{2}}
\left(\frac{\p\hphantom{\nu}}{\p\nu^*}\frac{\p \hphantom{\xi}}{\p\xi}\mathcal{T}_6 +\mathcal{T}_5 -\l^0\right)\phi_k\di\xi=0.
\end{equation*}
Substituting here the expressions for  $\frac{\p\hphantom{\nu}}{\p\nu^*}$ and identities (\ref{6.9}), (\ref{6.10}), we obtain equations for eigenfunctions $\phi_k$:
\begin{equation*}
\Ho\phi_k=\l^0\phi_k,
\end{equation*}
which holds by the definition of eigenfunctions $\phi_k$ and eigenvalue $\l^0$.  Returning back to problem (\ref{6.12}), we substitute there formulae (\ref{6.9}), (\ref{6.10}), take into consideration the identity
\begin{equation}\label{6.17}
\frac{\p\phi^{(1)}_k}{\p\xi}=\frac{\p\hat{\phi}^{(k)}_1}{\p\xi}
\end{equation}
and write down then the solution  by Lemma~\ref{lm6.1}. As a result we get
\begin{align}\label{6.13}
&\phi_k^{(2)}(x',\xi)=\check{\phi}_k^{(2)}(x',\xi)+\hat{\phi}_k^{(2)}(x',\xi)+ \Phi_k^{(2)}(x'),
\\
&\hat{\phi}_k^{(2)}=\mathcal{T}_6\Phi_k^{(1)}, \quad \check{\phi}_k^{(2)}=\mathcal{T}_7\Phi_k^{(1)},\label{6.19}
\end{align}
where $\mathcal{T}_7 \phi$ is the function defined by formula (\ref{6.8}) with
\begin{equation*}
F=\left(\frac{\p\hphantom{\nu}}{\p\nu^*}\frac{\p\hphantom{\xi} }{\p\xi}\mathcal{T}_6-\frac{\p \hphantom{.} }{\p\xi}\frac{\p\hphantom{\nu}}{\p\nu}\mathcal{T}_6
+ \mathcal{T}_5 -\l^0\right)\phi,\quad g_-=\frac{\p\hphantom{\nu}}{\p\nu}\mathcal{T}_6\phi\bigg|_{\xi=-\frac{1}{2}}.
\end{equation*}

We proceed to the case $p=3$. Here problem (\ref{6.4}) casts into the form
\begin{equation}\label{6.14}
\begin{aligned}
-&\frac{\p\hphantom{.}}{\p\xi}A_{nn} \frac{\p\phi_k^{(3)}}{\p\xi}+\frac{\p\hphantom{\nu}}{\p\nu^*}\frac{\p \phi_k^{(2)}}{\p\xi}-\frac{\p \hphantom{.} }{\p\xi}\frac{\p\phi_k^{(2)}}{\p\nu}
+
 \mathcal{T}_5 \phi_k^{(1)}=\l^0\phi_k^{(1)}+  \L_k^{(1)} \phi_k \quad\text{in}\quad\Om,
\\
& A_{nn}\frac{\p\phi_k^{(3)}}{\p\xi}+\frac{\p\phi_k^{(2)}}{\p\nu}=0 \quad\text{on}\quad \p\Om.
\end{aligned}
\end{equation}
We write down solvability condition (\ref{6.6}) for this problem and bear in mind formulae (\ref{6.10}), (\ref{6.13}), (\ref{6.13}) and identity (\ref{6.7}) for $\hat{\phi}_k^{(1)}$, $\hat{\phi}_k^{(2)}$, $\check{\phi}_k^{(2)}$. We get
\begin{gather}
(\Ho-\l^0)\Phi_k^{(1)}=h_k^{(1)}+\l^{(1)}_k\phi_k,\label{6.15}
\\
h_k^{(1)}:=-\int\limits_{-\frac{1}{2}}^{\frac{1}{2}}
\left(
\frac{\p\hphantom{\nu}}{\p\nu^*}\frac{\p \check{\phi}_k^{(2)}}{\p\xi} + \mathcal{T}_5 \hat{\phi}_k^{(1)}
\right)
\di\xi=-\int\limits_{-\frac{1}{2}}^{\frac{1}{2}}
\left(
\frac{\p\hphantom{\nu}}{\p\nu^*}\frac{\p \hphantom{\xi}}{\p\xi} \mathcal{T}_7 + \mathcal{T}_5 \mathcal{T}_6
\right)\phi_k\di\xi.\nonumber
\end{gather}
Since $\l^0$ is an $m$-multiple eigenvalue of operator $\Ho$ and the latter is self-adjoint, the obtained equation is solvable if and only if the right hand side is orthogonal to all $\phi_s$, $s=-1,\ldots,m$, in $L_2(\mathds{R}^{n-1})$:
\begin{equation}\label{6.16}
(h_k^{(1)},\phi_s)_{L_2(\mathds{R}^{n-1})}+\l_k^{(1)}\d_{ks}=0,\quad k,s=1,\ldots,m,
\end{equation}
where $\d_{ks}$ is the Kronecker delta. Let us show that numbers $\L_k^{(1)}$ and functions $\phi_k$ can be chosen so that these identities are satisfied. We first prove that the matrix composed by the numbers $-(h_k^{(1)},\phi_s)_{L_2(\mathds{R}^{n-1})}$ is Hermitian. We indicate this matrix by $\mathrm{L}$.

The definition implies immediately that
\begin{equation}\label{6.20}
-(h_k^{(1)},\phi_s)_{L_2(\mathds{R}^{n-1})}= \left(\frac{\p\hphantom{\nu}}{\p\nu^*}\frac{\p\check{\phi}_k^{(2)}}{\p\xi}, \phi_s\right)_{L_2(\Om)}+ (\mathcal{T}_5\hat{\phi}_k^{(1)},\phi_s)_{L_2(\Om)}.
\end{equation}
Integrating by parts and employing  (\ref{6.9}), (\ref{6.17}), (\ref{6.19}), we have
\begin{align*} &\left(\frac{\p\hphantom{\nu}}{\p\nu^*}\frac{\p\check{\phi}_k^{(2)}}{\p\xi}, \phi_s\right)_{L_2(\Om)}= \left(\frac{\p\check{\phi}_k^{(2)}}{\p\xi}, \frac{\p \phi_s }{\p\nu^*}\right)_{L_2(\Om)}=-\left(\frac{\p\check{\phi}_k^{(2)}}{\p\xi}, A_{nn}\frac{\p \hat{\phi}_s^{(1)} }{\p\xi}\right)_{L_2(\Om)}
\\
&=-\int\limits_{\mathds{R}^{n-1}} A_{nn}\frac{\p\check{\phi}_k^{(2)}}{\p\xi} \overline{\hat{\phi}_s^{(1)}}\bigg|_{\xi=-\frac{1}{2}}^{\xi=\frac{1}{2}}\di x' + \left( \frac{\p}{\p\xi} A_{nn} \frac{\p\check{\phi}_k^{(2)}}{\p\xi}, \hat{\phi}_s^{(1)}\right)_{L_2(\Om)}
\\
&=-\int\limits_{\mathds{R}^{n-1}} A_{nn}\frac{\p\check{\phi}_k^{(2)}}{\p\xi} \overline{\hat{\phi}_s^{(1)}}\bigg|_{\xi=-\frac{1}{2}}^{\xi=\frac{1}{2}}\di x' - \left(\frac{\p \hphantom{.} }{\p\xi}\frac{\p\hat{\phi}_k^{(1)}}{\p\nu}-\frac{\p\hphantom{\nu}}{\p\nu^*}\frac{\p \hat{\phi}_k^{(1)}}{\p\xi}-\mathcal{T}_5\phi_k,  \hat{\phi}_s^{(1)}\right)_{L_2(\Om)}
\\
&= \left(\frac{\p\hat{\phi}_k^{(1)}}{\p\nu},
\frac{\p \hat{\phi}_s^{(1)}}{\p\xi}\right)_{L_2(\Om)}+
\left(\frac{\p \hat{\phi}_k^{(1)}}{\p\xi}, \frac{\p\hat{\phi}_s^{(1)}}{\p\nu}\right)_{L_2(\Om)}
+\left(\mathcal{T}_5\phi_k, \hat{\phi}_s^{(1)}\right)_{L_2(\Om)}.
\end{align*}
These identities and (\ref{6.20}) prove that matrix $\mathrm{L}$ is Hermitian. By the theorem on simultaneous diagonalization of two quadratic forms we conclude that keeping eigenfunctions
$\phi_k$ orthonormalized in  $L_2(\mathds{R}^{n-1})$, we can choose them so that $\mathrm{L}$ is diagonal. In this case identities (\ref{6.16}) obviously hold true, once we let $\L_k^{(1)}$ equal to the eigenvalues of matrix $\mathrm{L}$. In what follows the quantities $\L_k^{(1)}$ and  eigenfunctions $\phi_k$ are assumed to chosen exactly in this way.

Since solvability conditions (\ref{6.16}) are satisfied, equation (\ref{6.15}) has the unique solution orthogonal to all eigenfunctions $\phi_s$, $s=1,\ldots,m$, in $L_2(\mathds{R}^{n-1})$. We denote this solution by $\Psi_k^{(1)}$; then the general solution to equation  (\ref{6.15}) is given by the formula
\begin{equation}\label{6.18}
\Phi_k^{(1)}=\Psi_k^{(1)}+\sum\limits_{s=1}^{m}b_{k,s}^{(1)}\phi_s,
\end{equation}
where $b_{k,s}^{(1)}$ are some constants. Having solved equation (\ref{6.15}), we return back to the boundary value problem (\ref{6.14}) and we find its solution by means of Lemma~\ref{lm6.1}.

In what follows we assume additionally that the eigenvalues of matrix $\mathrm{L}$ are different. Such assumption is technical and inessential; it is made just to simplify further calculations, see Remark~\ref{rm6.1} below.

The further process of solving boundary value problem (\ref{6.4}) for arbitrary $p$ is similar to above arguments. Namely, writing out solvability condition (\ref{6.6}) for problem (\ref{6.4}), we obtain equation for the function $\Phi_k^{(p-2)}(x')$ appearing while solving problem (\ref{6.4}) for $(p-2)$ as an arbitrary term in the general solution. Equation for function $\Phi_k^{(p-2)}$ is analogous to equation (\ref{6.15}) but from some other right hand side. The solvability condition of this equation, the orthogonality of the right hand side to all the eigenfunctions  $\phi_q$, $q=1,\ldots,m$, in $L_2(\mathds{R}^{n-1})$, allows us to determine  numbers $\L_k^{(p-2)}$. We then solve the obtained equation for $\Phi_k^{(p-2)}$ and by Lemma~\ref{lm6.1} we solve problem (\ref{6.4}). Functions  $\phi_k^{(p)}$ and numbers  $\L_k^{(p)}$ are described in the next statement.

\begin{lemma}\label{lm6.2}
There exist numbers $\L_k^{(p)}$, $p\geqslant 1$, such that the boundary value problem (\ref{6.4}) is solvable for each  $p\geqslant 1$. Solutions to these problems are represented as
\begin{equation}\label{6.21}
\phi_k^{(p)}(x',\xi)=\tilde{\phi}_k^{(p)}(x',\xi) +\check{\phi}_k^{(p)}(x',\xi) +\hat{\phi}_k^{(p)}(x',\xi) +\Phi_k^{(p)}(x'),
\end{equation}
where
\begin{align}
&\hat{\phi}_k^{(p)}=\mathcal{T}_6 \Phi_k^{(p-1)},\quad \check{\phi}_k^{(p)}=\mathcal{T}_7\Phi_k^{(p-2)},\label{6.22}
\\
&\Phi_k^{(p)}=\Psi_k^{(p)}+\sum\limits_{s=1}^{m} b_{k,s}^{(p)} \phi_s,\label{6.23}
\end{align}
$\Psi_k^{(p)}$ is the solution to the equation
\begin{align}
&(\Ho-\l^0)\Psi_k^{(p)}=h_k^{(p)}+\sum\limits_{q=1}^{p} \L_k^{(q)} \Phi_k^{(p-q)},\label{6.24}
\\
&h_k^{(p)}=\check{h}_k^{(p)}-\sum\limits_{s=1}^{m} b_{k,s}^{(p-1)} \int\limits_{-\frac{1}{2}}^{\frac{1}{2}} \left(
\frac{\p\hphantom{\nu}}{\p\nu^*}\frac{\p \hphantom{\xi}}{\p\xi} \mathcal{T}_7 + \mathcal{T}_5 \mathcal{T}_6
\right)\phi_s\di\xi,\nonumber
\\
&\check{h}_k^{(p)}:=-\int\limits_{-\frac{1}{2}}^{\frac{1}{2}} \frac{\p\hphantom{\nu}}{\p\nu^*}\frac{\p \hphantom{\xi}}{\p\xi} \Big(\tilde{\phi}_k^{(p+1)}+\mathcal{T}_7\Psi_k^{(p-1)}\Big)\di\xi 
-\int\limits_{-\frac{1}{2}}^{\frac{1}{2}} \mathcal{T}_5 \Big(\tilde{\phi}_k^{(p)}+\check{\phi}_k^{(p)}+\mathcal{T}_6 \Psi_k^{(p-1)}\Big)\di\xi,\nonumber
\end{align}
which is orthogonal to all eigenfunctions $\phi_q$, $q=1,\ldots,m$, in  $L_2(\mathds{R}^{n-1})$, and numbers $b_{k,s}^{(p)}$ and $\L_k^{(p)}$ are determined by the identities
\begin{equation}
\begin{aligned}
&\L_k^{(p)}=-(\check{h}_k^{(p)},\phi_k)_{L_2(\mathds{R}^{n-1})} - \sum\limits_{q=2}^{p-1} \L_k^{(q)} b_{k,q}^{(p-q)},
\\
&b_{k,k}^{(p)}=0,\quad b_{k,s}^{(p)}=\frac{ (\check{h}_k^{(p+1)},\phi_s)_{L_2(\mathds{R}^{n-1})} + \sum\limits_{q=2}^{p} \L_k^{(q)} b_{k,q}^{(p-q+1)}}{\L_{s}^{(1)}-\L_k^{(1)}}.
\end{aligned}\label{6.25}
\end{equation}
Functions $\tilde{\phi}_k^{(p)}$ are given by formula (\ref{6.8}) with
\begin{align*}
F=&\left(\frac{\p\hphantom{\nu}}{\p\nu^*}\frac{\p \hphantom{\xi}}{\p\xi}  - \frac{\p \hphantom{\xi}}{\p\xi} \frac{\p\hphantom{\nu}}{\p\nu}+ \mathcal{T}_5\right)  \Big(\tilde{\phi}_k^{(p-1)} + \check{\phi}_k^{(p-1)}\Big)\\
& -\l^0 \Big(\tilde{\phi}_k^{(p-1)}+ \check{\phi}_k^{(p-1)}+\hat{\phi}_k^{(p-1)}\Big)
+\sum\limits_{q=1}^{p-2} \L_k^{(q)}\phi_k^{(p-q-2)},
\\
g_-=&\frac{\p\hphantom{\nu}}{\p\nu} \Big(\tilde{\phi}_k^{(p-1)}+ \check{\phi}_k^{(p-1)}+\Phi_k^{(p-1)}\Big)\Big|_{\xi=-\frac{1}{2}}.
\end{align*}
Functions $\phi_k^{(p)}$ are infinitely differentiable w.r.t.  $x'$ and for each $\b\in\mathds{Z}_+^{n-1}$ the belongings
\begin{equation*}
\frac{\p^{|\b|} \phi_k^{(p)}}{\p x^\b} \in C^2(\overline{\Om})\cap L_\infty(\Om)
\end{equation*}
hold true.
\end{lemma}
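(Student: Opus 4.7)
The plan is to prove the lemma by induction on $p$, with the base cases $p=1,2,3$ essentially already treated in the discussion preceding the statement. The inductive hypothesis at level $p$ will assert that for every $q\leqslant p-1$ the solutions $\phi_k^{(q)}$ have been constructed in the form \eqref{6.21} with components obeying \eqref{6.22}--\eqref{6.23}, that the numbers $\L_k^{(q)}$ and $b_{k,s}^{(q-1)}$ have been fixed by \eqref{6.25}, and that the required smoothness w.r.t.\ $x'$ holds. The construction at step $p$ then unfolds in three stages.

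First, I would write the solvability condition \eqref{6.6} for problem \eqref{6.4} at index $p+1$. Substituting the representation \eqref{6.21} for $\phi_k^{(p-1)}$ and $\phi_k^{(p)}$, using $\frac{\p\hat\phi_k^{(q)}}{\p\xi}=-A_{nn}^{-1}\frac{\p\Phi_k^{(q-1)}}{\p\nu}$ and the analogous identity from $\mathcal{T}_7$, and integrating the boundary terms against the equation, produces an equation of the form \eqref{6.24} for the still-undetermined auxiliary function $\Phi_k^{(p-1)}$. The point is that the solvability condition at level $p+1$ is what pins down the free scalar function introduced two levels earlier, mirroring exactly the emergence of equation \eqref{6.15} when analysing the $p=3$ case.

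Next, since $\Ho$ is self-adjoint and $\l^0$ is $m$-fold, Fredholm's alternative applied to \eqref{6.24} yields $m$ scalar orthogonality conditions in $L_2(\mathds{R}^{n-1})$. I would project \eqref{6.24} against each $\phi_s$. The diagonal projection $s=k$ determines $\L_k^{(p)}$ by the first line of \eqref{6.25}. The off-diagonal projections $s\neq k$ involve the unknown coefficients $b_{k,s}^{(p-q+1)}$ through the sum $\sum_q \L_k^{(q)}\Phi_k^{(p-q)}$; extracting the terms multiplying $\phi_s$ via \eqref{6.23} gives linear relations whose coefficient of $b_{k,s}^{(p)}$ is precisely $\L_s^{(1)}-\L_k^{(1)}$. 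This is where the standing assumption that the eigenvalues of $\mathrm{L}$ are distinct enters decisively: it guarantees invertibility and produces the second line of \eqref{6.25}. Once these scalars are fixed, the right-hand side of \eqref{6.24} lies in the orthogonal complement of $\ker(\Ho-\l^0)$, and there exists a unique solution $\Psi_k^{(p-1)}$ orthogonal to all $\phi_q$.

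Finally, with \eqref{6.24} solved, Lemma~\ref{lm6.1} produces $\phi_k^{(p)}$ explicitly via \eqref{6.8} with the $F$ and $g_-$ indicated in the statement, giving the decomposition \eqref{6.21}; the remaining scalar function $\Phi_k^{(p)}$ is then parameterized as \eqref{6.23} and will be fully determined at the next step of the induction. The smoothness assertion follows from elliptic regularity applied to the equation $(\Ho-\l^0)\Psi_k^{(p-1)}=\cdots$ in $\mathds{R}^{n-1}$, combined with the smoothness hypothesis \eqref{1.20} on the coefficients (which transfers directly through the explicit formulas for $\mathcal{T}_6$, $\mathcal{T}_7$ and \eqref{6.8}) and a straightforward induction in $|\b|$. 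I expect the principal technical obstacle to be the bookkeeping in stage two: correctly isolating the contribution of $b_{k,s}^{(p)}$ inside the sum $\sum_{q=1}^p \L_k^{(q)}\Phi_k^{(p-q)}$ and verifying that all previously undetermined coefficients $b_{k,s}^{(r)}$ with $r<p$ have already been fixed at earlier inductive steps, so that \eqref{6.25} is a genuine definition rather than an implicit relation.
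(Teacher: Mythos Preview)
Your proposal is correct and follows essentially the same inductive scheme the paper uses; indeed, the paper's own proof is a single line (``by induction employing the expressions for $\phi_k^{(1)}$, $\phi_k^{(2)}$ obtained above, assuming $\tilde\phi_k^{(1)}=\tilde\phi_k^{(2)}=\hat\phi_k^{(2)}=0$''), and your three stages simply flesh out that induction in the natural way, matching the paragraph preceding the lemma. One minor bookkeeping slip: the solvability condition for problem \eqref{6.4} at level $p$ produces the equation for $\Phi_k^{(p-2)}$ and fixes $\L_k^{(p-2)}$ (as in the $p=3$ case yielding \eqref{6.15} and $\L_k^{(1)}$), so your indexing between stages one and two is off by one, but this does not affect the argument.
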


\noindent The lemma can be proven easily by induction employing the expressions for functions $\phi_k^{(1)}$, $\phi_k^{(2)}$ obtained above. At that, one should assume that $\tilde{\phi}_k^{(1)}=\tilde{\phi}_k^{(2)}=\hat{\phi}_k^{(2)}=0$.

\begin{remark}\label{rm6.1}
The assumption on different eigenvalues for matrix $\mathrm{L}$ was employed in Lemma~\ref{lm6.2} for obtaining formulae  (\ref{6.25}). If this assumption does not hold, it just means that there is no complete splitting of leading terms in the asymptotics for  the perturbed eigenvalues. In this case it is a not a complicated problem to determine the terms of series (\ref{6.1}), (\ref{6.2}). The only difference is that on the next steps there appears a matrix similar to $\mathrm{L}$ which will determine the appropriate choice of eigenfunctions $\phi_k$. It imply no essential changes in the scheme of constructing the solutions to problem (\ref{6.4}).
\end{remark}

Thus, no matter how the eigenvalues of matrix $\mathrm{L}$ look like, it is possible to construct asymptotic series (\ref{6.1}), (\ref{6.2}) so that the next lemma holds true.

\begin{lemma}\label{lm6.3}
Let $N$ be an arbitrary natural number. The functions
\begin{equation*}
\phi_k^{(\e,N)}(x)=\e^{-1/2}\left(\phi_k(x')+\sum\limits_{p=1}^{N}
\e^p \phi_k^{(p)}(x',x_n\e^{-1})\right), \quad \l_k^{(\e,N)}=\l^0+\sum\limits_{p=1}^{N-2} \e^p \L_k^{(p)}
\end{equation*}
satisfy the estimates
\begin{align}
&\|\phi_k^{(\e,N)}-\e^{-1/2}\phi_k\|_{L_2(\Om^\e)}\leqslant C\e,\quad |\l_k^{(\e,N)}-\l^0|\leqslant C\e,\label{6.26}
\\
&
\|f_k^{(\e,N)}\|_{L_2(\Om^\e)}\leqslant C\e^{N-1},\quad
f_k^{(\e,N)}:=(\He-\l_k^{(\e,N)})\phi_k^{(\e,N)}.\label{6.27}
\end{align}
Here $C$ are some constants independent of $\e$ but depending, generally speaking, on $N$, while estimate (\ref{6.27}) involves the statement on belonging function $\phi_k^{(\e,N)}$ to domain of operator $\He$.
\end{lemma}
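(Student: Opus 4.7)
The plan is three-fold: verify (\ref{6.26}) by a direct rescaling computation, establish $\phi_k^{(\e,N)}\in\Dom(\He)$ using the recursive boundary conditions of (\ref{6.4}) supplemented by a small boundary-layer corrector, and derive (\ref{6.27}) by substituting the truncated series into $\He-\l_k^{(\e,N)}$ and collecting terms by powers of $\e$.

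For (\ref{6.26}) I would change variables $\xi=x_n/\e$ to obtain $\|\e^{-1/2}\e^p\phi_k^{(p)}(\cdot,\cdot/\e)\|_{L_2(\Om^\e)}=\e^p\|\phi_k^{(p)}\|_{L_2(\Om)}$ for each $p\geqslant 1$; this is finite because Lemma~\ref{lm6.2} ensures that $\phi_k^{(p)}$ is $C^2$ and uniformly bounded in $\xi$, while the $L_2(\mathds{R}^{n-1})$-integrability in $x'$ is inherited from $\phi_k\in\H^2(\mathds{R}^{n-1})$ (Lemma~\ref{lm4.2}) through the operators $\mathcal{T}_6$, $\mathcal{T}_7$ and the reduced resolvent of $\Ho-\l^0$ used in the formal construction. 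Summing over $p=1,\ldots,N$ yields the $O(\e)$ bound, and the estimate on $\l_k^{(\e,N)}-\l^0$ is immediate. The $\H^2$-regularity of $\phi_k^{(\e,N)}$ in $\Om^\e$ follows from the same $C^2$-regularity of the coefficients $\phi_k^{(p)}$. The Robin condition is the delicate point: applying $\frac{\p}{\p\nu^\e}+\iu\a^\e$ to $\phi_k^{(\e,N)}$ and using $\frac{\p}{\p x_n}=\e^{-1}\frac{\p}{\p\xi}$ gives, after reindexing,
\begin{equation*}
\left(\tfrac{\p}{\p\nu^\e}+\iu\a^\e\right)\phi_k^{(\e,N)}\bigg|_{\xi=\pm 1/2}=\e^{-1/2}\sum_{p=1}^{N}\e^{p-1}\left(A_{nn}\tfrac{\p\phi_k^{(p)}}{\p\xi}+\tfrac{\p\phi_k^{(p-1)}}{\p\nu}\right)\bigg|_{\xi=\pm 1/2}+\e^{N-1/2}\tfrac{\p\phi_k^{(N)}}{\p\nu}\bigg|_{\xi=\pm 1/2}.
\end{equation*}
The boundary conditions of (\ref{6.4}) annihilate the sum, leaving an uncompensated tail of order $\e^{N-1/2}$. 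I would absorb this tail by appending a boundary-layer corrector $\e^{N+1/2}\chi(x',x_n/\e)$, where $\chi$ is obtained from Lemma~\ref{lm6.1} with $F=0$ and $g_\pm$ chosen to match the residual; this corrector is $O(\e^{N+1})$ in $L_2(\Om^\e)$, so (\ref{6.26}) is preserved, while it introduces only $O(\e^{N-1})$ in the bulk.

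For (\ref{6.27}) I would substitute the corrected ansatz into $\He-\l_k^{(\e,N)}$ and collect by powers of $\e$. The coefficient of $\e^{p-5/2}$ in the interior is exactly the difference of the left- and right-hand sides of the bulk equation in (\ref{6.4}) at level $p$, which vanishes for $p=0,1,\ldots,N$ by Lemma~\ref{lm6.2}. The remaining contributions come from the truncation at $p=N+1,N+2$ and involve only $\phi_k^{(N)}$, the added corrector $\chi$, and constants $\L_k^{(q)}$ with $q\leqslant N-2$; each is pointwise bounded thanks to Lemma~\ref{lm6.2} and hypothesis (\ref{1.20}), and the change of variables once more produces the required $O(\e^{N-1})$ bound in $L_2(\Om^\e)$. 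I expect the main obstacle to be the boundary step: the raw formal construction of Lemma~\ref{lm6.2} satisfies the Robin condition only to order $\e^{N-1/2}$, so exact domain membership forces the boundary corrector, and one must verify that it damages neither (\ref{6.26}) nor the residual bound. Everything else is a mechanical bookkeeping of the cancellations already encoded in (\ref{6.4}).
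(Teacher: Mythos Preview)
The paper gives no proof of Lemma~\ref{lm6.3}; it is simply stated as an immediate byproduct of the recursive construction in Lemma~\ref{lm6.2}. Your three-step plan (rescaling for (\ref{6.26}), a boundary corrector for domain membership, substitution and power-counting for (\ref{6.27})) is the standard way to flesh out such a statement, and you correctly isolate the Robin condition as the only nontrivial point.

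The specific corrector you propose, however, does not close the argument. Invoking Lemma~\ref{lm6.1} with $F=0$ forces the compatibility $g_-=g_+$, i.e.\ $\frac{\p\phi_k^{(N)}}{\p\nu}\big|_{\xi=-1/2}=\frac{\p\phi_k^{(N)}}{\p\nu}\big|_{\xi=1/2}$, which has no reason to hold. Worse, even when such a $\chi$ exists it has in general $\frac{\p\chi}{\p\nu}\big|_{\xi=\pm1/2}\neq0$, so applying $\frac{\p}{\p\nu^\e}+\iu\a^\e$ to the corrected function still leaves a boundary residual, now of order $\e^{N+1/2}$; exact membership in $\Dom(\He)$ is therefore not achieved and your iteration does not terminate. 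The simple remedy is to choose instead a corrector that \emph{vanishes} on $\p\Om$ while having the prescribed $\xi$-derivative there: for instance
\[
\chi(x',\xi)=c_+(x')\Big(\xi-\tfrac12\Big)\rho_+(\xi)+c_-(x')\Big(\xi+\tfrac12\Big)\rho_-(\xi),
\qquad c_\pm=-A_{nn}^{-1}\frac{\p\phi_k^{(N)}}{\p\nu}\Big|_{\xi=\pm1/2},
\]
with smooth cutoffs $\rho_\pm$ localised near $\pm\tfrac12$. Since $\chi|_{\xi=\pm1/2}=0$, all tangential derivatives vanish on $\p\Om$ and hence $\frac{\p\chi}{\p\nu}\big|_{\xi=\pm1/2}=0$ as well, so no secondary boundary residual appears. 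This corrector is $O(\e^{N+1})$ in $L_2(\Om^\e)$ and, after the rescaling $\di x_n=\e\,\di\xi$, contributes $O(\e^{N-1})$ to $\|f_k^{(\e,N)}\|_{L_2(\Om^\e)}$, so both (\ref{6.26}) and (\ref{6.27}) survive. With this amendment your argument goes through.
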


\section{Asymptotic expansions: justification}

In the present section we complete the proof of Theorem~\ref{th2.4} and justify the formal asymptotic expansions constructed in the previous section. First we prove two auxiliary statement and then we proceed to the justification. непосредственно обоснованием.

\begin{lemma}\label{lm7.1}
Eigenfunctions $\psi_k^\e$ of the perturbed operator can be chosen so that they satisfy the relations
\begin{equation}
(\psi_k^\e,\mathcal{P}\psi_j^\e)_{L_2(\Om^\e)}=\d_{jk},\quad j,k=1,\ldots,m. \label{7.2}
\end{equation}
\end{lemma}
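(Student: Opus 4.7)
The plan is to realize (\ref{7.2}) as the orthonormality condition for the Hermitian sesquilinear form
\begin{equation*}
B^\e(u,v):=(u,\mathcal{P}v)_{L_2(\Om^\e)}
\end{equation*}
restricted to the $m$-dimensional subspace $E^\e\subset L_2(\Om^\e)$ spanned by the perturbed eigenfunctions $\psi_k^\e$. The form $B^\e$ is Hermitian, $B^\e(v,u)=\overline{B^\e(u,v)}$, since $\mathcal{P}=\mathcal{P}^*$ and $\mathcal{P}^2=\I$. Moreover, by Lemma~\ref{lm5.2} the algebraic and geometric multiplicities of each perturbed eigenvalue coincide, so $E^\e$ really does consist of honest eigenfunctions and not generalized ones.

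The first step is to extract pseudo-orthogonality across distinct eigenvalues. Applying (\ref{1.9}) together with the reality of $\l_k^\e$ from the last assertion of Theorem~\ref{th2.3},
\begin{equation*}
\l_k^\e B^\e(\psi_k^\e,\psi_j^\e)=(\He\psi_k^\e,\mathcal{P}\psi_j^\e)=(\psi_k^\e,\mathcal{P}\He\psi_j^\e)=\l_j^\e B^\e(\psi_k^\e,\psi_j^\e),
\end{equation*}
so $B^\e(\psi_k^\e,\psi_j^\e)=0$ whenever $\l_k^\e\ne\l_j^\e$. Hence the decomposition of $E^\e$ into blocks corresponding to distinct perturbed eigenvalues is automatically $B^\e$-orthogonal, and it remains to arrange (\ref{7.2}) inside each individual eigenspace $E^\e_\nu$.

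Fix a perturbed eigenvalue $\l^\e$ of multiplicity $r$. By the analysis between (\ref{5.13}) and (\ref{5.16}), any basis of $E^\e_\nu$ is parameterized by a basis of $\ker\bigl((\l_0-\l^\e)\mathrm{E}_m+\mathrm{B}^\e(\l^\e)\bigr)$ in $\mathds{C}^m$. I choose $r$ vectors $Z^{(1)},\ldots,Z^{(r)}$ in this kernel that are orthogonal in $\mathds{C}^m$ and normalized as in (\ref{5.17}), and take $\psi_k^\e$ to be the associated eigenfunctions. Substituting the resulting asymptotics (\ref{5.18}), (\ref{5.21}) into $B^\e$, and using $\mathcal{P}\phi_j=\phi_j$ together with $(\phi_i,\phi_j)_{L_2(\mathds{R}^{n-1})}=\d_{ij}$, an expansion identical to the derivation of (\ref{5.20}) yields
\begin{equation*}
M^\e_{kj}:=B^\e(\psi_k^\e,\psi_j^\e)=\d_{kj}+\Odr(\e+\eta(\e)).
\end{equation*}
For $\e$ small, $M^\e$ is Hermitian and positive definite on $E^\e_\nu$, so it admits a Hermitian positive definite square root; the change of basis $\tilde\psi_k^\e:=\sum_l((M^\e)^{-1/2})_{kl}\psi_l^\e$ stays inside $E^\e_\nu$, still consists of eigenfunctions for $\l^\e$, and satisfies $B^\e(\tilde\psi_k^\e,\tilde\psi_j^\e)=\d_{kj}$. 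Assembling such bases over all perturbed eigenvalues and invoking the pseudo-orthogonality of the previous step produces a basis of $E^\e$ verifying (\ref{7.2}). The only substantive point is the asymptotic $M^\e=\mathrm{E}_r+o(1)$; once this is established, the positive-definiteness of $B^\e|_{E^\e_\nu}$ and the resulting orthonormalization are routine finite-dimensional linear algebra.
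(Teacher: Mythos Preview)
Your proof is correct and follows essentially the same route as the paper's: both arguments use $\mathcal{P}$-pseudo-Hermiticity together with the reality of the $\l_k^\e$ to force $B^\e$-orthogonality between distinct eigenspaces, and then invoke the asymptotics (\ref{5.18})--(\ref{5.20}) to see that $B^\e$ restricts to a genuine scalar product on each eigenspace, after which one orthonormalizes. Your version is slightly more explicit in the eigenspace step (you compute the full Gram matrix $M^\e=\mathrm{E}_r+o(1)$ and take its inverse square root, whereas the paper simply asserts that $B^\e$ is a scalar product on the eigenspace), and you make the useful remark, via Lemma~\ref{lm5.2}, that no generalized eigenfunctions appear.
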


\begin{proof}
In accordance with the results of the fifth section, each eigenfunction of the perturbed operator satisfy identities (\ref{5.18}) and (\ref{5.20}). Multiplying the eigenfunctions by appropriate constants, by (\ref{5.20}) we get (\ref{7.2}) for $j=k$. In view of these relations the form $(\,\cdot\,,\mathcal{P}\,\cdot)_{L_2(\Om^\e)}$ is a scalar product on an eigenspace of the perturbed operator associated with an eigenvalue. This is why these eigenfunctions can be chosen so that they satisfy relations (\ref{7.2}).

Suppose now eigenfunctions $\psi_k^\e$ and $\psi_j^\e$  are associated with different eigenvalues $\l_k^\e$ and $\l_j^\e$. Then taking into consideration the reality of these eigenvalues, by analogy with (\ref{5.19}) it is easy to check that
\begin{equation*}
0=\big((\He-\l_k^\e)\psi_k^\e,\mathcal{P}\psi_j^\e\big)_{L_2(\Om^\e)}= (\l_j^\e-\l_k^\e)\big(\psi_k^\e,\mathcal{P}\psi_j^\e\big)_{L_2(\Om^\e)},
\end{equation*}
that implies the desired identity (\ref{7.2}).
\end{proof}

\begin{lemma}\label{lm7.2}
For $\l$ in a small fixed neighborhood of point $\l^0$ and all sufficiently small $\e$ the resolvent $(\He-\l)^{-1}$ can be represented as
\begin{equation}\label{7.4}
(\He-\l)^{-1}=\sum\limits_{k=1}^{m} \frac{(\,\cdot\,, \mathcal{P}\psi_k^\e)_{L_2(\Om^\e)}}{\l_k^\e-\l}\psi_k^\e+\mathcal{T}_8^\e(\l),
\end{equation}
where operator $\mathcal{T}_8^\e(\l): L_2(\Om^\e)\to\H^1(\Om^\e)$ is bounded uniformly in  $\l$ and $\e$ and holomorphic w.r.t. $\l$, while functions $\psi_k^{(\e)}$ are chosen in accordance with Lemma~\ref{lm7.1}.
\end{lemma}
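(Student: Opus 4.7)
The plan is to construct $\mathcal{T}_8^\e(\l)$ as the restriction of $(\He-\l)^{-1}$ to the spectral complement of the eigenspace spanned by $\psi_1^\e,\ldots,\psi_m^\e$, using the $\mathcal{P}$-biorthogonality of Lemma~\ref{lm7.1} in place of a standard orthogonal projection. First I would introduce the finite-rank operator
\begin{equation*}
\mathcal{P}^\e u:=\sum_{k=1}^{m}(u,\mathcal{P}\psi_k^\e)_{L_2(\Om^\e)}\psi_k^\e.
\end{equation*}
Identity (\ref{7.2}) immediately yields $(\mathcal{P}^\e)^2=\mathcal{P}^\e$ and $\mathrm{rank}\,\mathcal{P}^\e=m$. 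The commutation $\He\mathcal{P}^\e=\mathcal{P}^\e\He$ I would derive by observing that $\mathcal{P}\psi_k^\e$ is a left eigenvector of $\He$: combining $(\He)^*=\mathcal{P}\He\mathcal{P}$ from Theorem~\ref{th2.1} with the reality of $\l_k^\e$ from Theorem~\ref{th2.3} gives $(\He)^*\mathcal{P}\psi_k^\e=\mathcal{P}\He\psi_k^\e=\l_k^\e\mathcal{P}\psi_k^\e$, hence $(\He u,\mathcal{P}\psi_k^\e)_{L_2(\Om^\e)}=\l_k^\e(u,\mathcal{P}\psi_k^\e)_{L_2(\Om^\e)}$ for $u\in\Dom(\He)$, from which the commutation follows.

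Given commutation, I would split $(\He-\l)^{-1}=(\He-\l)^{-1}\mathcal{P}^\e+(\He-\l)^{-1}(\I-\mathcal{P}^\e)$. Evaluating the first summand on the basis $\{\psi_k^\e\}$ via $(\He-\l)\psi_k^\e=(\l_k^\e-\l)\psi_k^\e$ together with (\ref{7.2}) reproduces exactly the sum on the right-hand side of (\ref{7.4}). I would then set $\mathcal{T}_8^\e(\l):=(\He-\l)^{-1}(\I-\mathcal{P}^\e)$. To establish holomorphy of $\mathcal{T}_8^\e(\l)$ in a small fixed disc $D$ around $\l^0$, I would fix a circle $\Gamma=\p D$ enclosing $\l^0$ but no other point of $\spec(\Ho)$. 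By Theorem~\ref{th2.3} the only elements of $\spec(\He)$ inside $\Gamma$ for sufficiently small $\e$ are $\l_1^\e,\ldots,\l_m^\e$, with combined algebraic multiplicity $m$; the $m$ linearly independent eigenfunctions provided by Lemma~\ref{lm7.1} force these eigenvalues to be semisimple, so $\mathcal{P}^\e$ agrees with the Riesz projector $\frac{1}{2\pi\iu}\oint_\Gamma(\He-z)^{-1}\di z$, and consequently $\mathcal{T}_8^\e(\l)$ is holomorphic throughout $D$.

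It then remains to prove the uniform bound $\|\mathcal{T}_8^\e(\l)\|_{L_2(\Om^\e)\to\H^1(\Om^\e)}\leqslant C$, independent of $\e$ and of $\l\in D$. For $z\in\Gamma$, Theorem~\ref{th2.2} delivers a uniform $L_2\to L_2$ bound on $(\He-z)^{-1}$ since $\Gamma$ avoids both $\mathds{K}$ and $\spec(\Ho)$, and I would upgrade this to an $\H^1$ bound by testing $(\He-z)u=f$ against $u$ itself, taking the real part, and invoking the coercivity of $\her$ exactly as in the proof of Lemma~\ref{lm4.1} to obtain $\|\nabla u\|_{L_2(\Om^\e)}\leqslant C(\|f\|_{L_2(\Om^\e)}+\|u\|_{L_2(\Om^\e)})$ with $\e$-independent constant. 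A maximum-modulus argument for the operator-valued holomorphic function $\mathcal{T}_8^\e$ then propagates the bound from $\Gamma$ into $D$, completing the proof. The main obstacle lies in this last step, specifically in verifying that the Cauchy-type manipulation absorbing the lower-order terms $A_j^\e$ yields constants that do not deteriorate as $\e\to 0$; once this $\e$-uniformity of the gradient estimate is in hand, everything else reduces to the spectral bookkeeping built around $\mathcal{P}^\e$.
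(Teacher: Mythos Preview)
Your proposal is correct and follows essentially the same route as the paper. You build the rank-$m$ projection $\mathcal{P}^\e$ directly from the biorthogonal system of Lemma~\ref{lm7.1} and then identify it with the Riesz projector; the paper proceeds in the opposite order, starting from the Riesz integral $-\tfrac{1}{2\pi\iu}\oint_\gamma(\He-z)^{-1}\di z$, showing it has rank~$m$ via the convergence (\ref{7.5}) and \cite[Ch.~I, \S 4.6]{K}, and then identifying the functionals as $c_k^\e(\cdot)=(\cdot,\mathcal{P}\psi_k^\e)_{L_2(\Om^\e)}$ by the same computation you use. The uniform $\H^1$ bound on $\Gamma$ (gradient estimate from the real part of the form, exactly as in Lemma~\ref{lm4.1}) followed by the maximum-modulus extension into the disc is done identically in the paper. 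Your commutation check via $(\He)^*\mathcal{P}\psi_k^\e=\l_k^\e\mathcal{P}\psi_k^\e$ is a slightly cleaner replacement for the paper's explicit verification that the quasinilpotent parts vanish.

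One point to tighten: when you assert that Theorem~\ref{th2.3} gives ``combined algebraic multiplicity $m$'' you are invoking more than that theorem literally delivers. Its proof (Lemmata~\ref{lm5.1},~\ref{lm5.2}) counts zeros of a determinant and matches them with \emph{geometric} multiplicities. The fact that the Riesz projector has rank exactly $m$---which is what you need to conclude semisimplicity and hence $\mathcal{P}^\e=$ Riesz projector---is precisely what the paper extracts from the norm convergence of projectors (\ref{7.5}) together with stability of dimension under small perturbations. You should invoke that step explicitly; once you do, the rest of your argument goes through as written.
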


\begin{proof}
Let $\g$ be a circle of small radius centered at point $\l^0$ and containing no other points of the spectrum of operator $\Ho$.  Then by Theorem~\ref{th2.3} for sufficiently small $\e$ all the eigenvalues of perturbed operator converging to  $\l^0$ as $\e\to+0$ are located inside the circumference $\g$ and are separated from it by a positive distance. Now it follows from Theorem~\ref{th2.2} that the convergence
\begin{equation}\label{7.5}
-\frac{1}{2\pi\iu}\int\limits_{\g} (\He-\l)^{-1}\di\l\to
-\frac{1}{2\pi\iu}\int\limits_{\g} (\Ho-\l)^{-1}\oplus0\di\l
\end{equation}
holds true in the sense of norm of operator in  $L_2(\Om^\e)$. According to \cite[Гл. I\!I\!I, \S 6.5]{K}, both sides of this convergence are the projectors in $L_2(\Om^\e)$ and by the self-adjointness of operator $\Ho$ and \cite[Гл. V, \S 3.5]{K} it holds
\begin{equation}\label{7.6}
-\frac{1}{2\pi\iu}\int\limits_{\g} (\Ho-\l)^{-1}\oplus0\di\l=\sum\limits_{k=1}^{m} \frac{\e^{-1} (\cdot,\phi_k)_{L_2(\Om^\e)}}{\l^0-\l}\phi_k.
\end{equation}
According to \cite[Гл. I, \S 4.6]{K}, it implies that the dimension of the projector in the left hand side of   (\ref{7.5}) also equals $m$ for all sufficiently small $\e$.

The definition of eigenfunction yields immediately that
\begin{equation*}
(\He-\l)^{-1}\psi_k^\e=(\l_k^\e-\l)^{-1}\psi_k^\e,
\end{equation*}
and thus by \cite[Гл. I\!I\!I, \S 6.5, урав. (6.36)]{K} we have
\begin{equation*}
-\frac{1}{2\pi\iu}\int\limits_{\g} (\He-\l)^{-1}\psi_k^\e\di\l=\psi_k^\e.
\end{equation*}
Thus, the projector in the left hand side of (\ref{7.5}) is that on the finite dimensional space spanned over functions $\psi_k^\e$, $k=1,\ldots,m$. We stress that generally speaking it is not an operator of orthogonal projection, since operator $\He$ is non-self-adjoint. Thus,
\begin{equation}\label{7.7}
-\frac{1}{2\pi\iu}\int\limits_{\g} (\He-\l)^{-1}\di\l=\sum\limits_{k=1}^{m} c_k^\e(\cdot) \psi_k^\e,
\end{equation}
where $c_k^\e: L_2(\Om^\e)\to \mathds{C}$ are some functionals.

Let us determine functionals $c_k^\e$. For an arbitrary function $f\in L_2(\Om^\e)$ and $\l\in\g$ by analogy with (\ref{5.19}) we deduce
\begin{equation*}
(f,\mathcal{P}\psi_k^\e)_{L_2(\Om^\e)}=\big((\He-\l)u, \mathcal{P}\psi_k^\e\big)_{L_2(\Om^\e)} =(\l_k^\e-\l)(u,\mathcal{P}\psi_k^\e)_{L_2(\Om^\e)},
\end{equation*}
that by (\ref{7.7}) and Lemma~\ref{lm7.1} implies
\begin{equation}\label{7.8}
c_k^\e(f)= (f,\mathcal{P}\psi_k^\e)_{L_2(\Om^\e)}.
\end{equation}
In the sense of \cite[Гл. I\!I\!I, \S 6.5]{K}, to each eigenvalue $\l_k^\e$ an quasinilpotent operator is associated and it reads as
\begin{equation*}
-\frac{1}{2\pi\iu}(\He-\l_k^\e)\int\limits_{\g_k^\e} (\He-\l)^{-1}\di\l.
\end{equation*}
Here $\g_k^\e$ is a small circle centered at  $\l_k^\e$ containing no other eigenvalues of perturbed operator except $\l_k^\e$. Since the integral $-\frac{1}{2\pi\iu}\int\limits_{\g_k^\e} (\He-\l)^{-1}\di\l$ is a part of the corresponding projector in  (\ref{7.5}), by (\ref{7.7}), (\ref{7.8}), the aforementioned operator vanishes. Hence, by (\ref{7.7}), (\ref{7.8}) and \cite[Гл. I\!I\!I, \S 6.5, урав. (6.35)]{K} we get representation (\ref{7.4}), where $\mathcal{T}_8^\e(\l)$ is a bounded in $L_2(\Om^\e)$ operator holomorphic w.r.t. $\l$. It remains to prove that it is uniformly bounded in $\e$ and $\l$ and is holomorphic w.r.t. $\l$ as an operator from $L_2(\Om^\e)$ into $\H^1(\Om^\e)$.

For an arbitrary $f\in L_2(\Om^\e)$ by estimates (\ref{3.11}), the norm $\|\nabla(\He-\l)^{-1}f\|_{L_2(\Om^\e)}$ is uniformly estimated by the norms $\|f\|_{L_2(\Om^\e)}$ and $\|(\He-\l)^{-1}f\|_{L_2(\Om^\e)}$. By means of this estimate it is easy to check that operator $\mathcal{T}_8^\e(\l)$ is holomorphic w.r.t. $\l$ also as an operator from $L_2(\Om^\e)$ into $\H^1(\Om^\e)$.

By Theorem~\ref{th2.2}, as $\l\in\g$, the operator $(\He-\l)^{-1}$ converges to  $(\Ho-\l)^{-1}\oplus0$ as $\e\to+0$. Expressing operator $\mathcal{T}_8^\e(\l)$ by (\ref{7.4}), for $f\in L_2(\Om^\e)$, $\l\in\g$, and sufficiently small $\e$ we get the uniform estimate
\begin{equation}\label{7.9}
\|\mathcal{T}_8^\e(\l)f\|_{\H^1(\Om^\e)}\leqslant C\|f\|_{L_2(\Om^\e)},
\end{equation}
where $C$  is a constant independent of $f$, $\l$, and sufficiently small $\e$. By the modulus maximum principle for holomorphic function, estimate (\ref{7.9}) is valid also for  $\l$ lying inside circle $\g$. The proof is complete.
\end{proof}

We proceed to the justification. It follows from Lemmata~\ref{lm6.3},~\ref{lm7.2} that
\begin{equation}\label{7.10}
\phi_k^{(\e,N)}=\sum\limits_{q=1}^{m} \frac{\big(f_k^{(\e,N)}, \mathcal{P}\psi_q^\e\big)_{L_2(\Om^\e)}}{\l_q^\e-\l_k^{(\e,N)}}\psi_q^\e +\mathcal{T}_8^\e(\l_k^{(\e,N)})f_k^{(\e,N)},\quad k=1,\ldots,m.
\end{equation}
Employing Lemma~\ref{lm7.1}, we get
\begin{align}
&
\begin{aligned}
\Big(\phi_k^{(\e,N)}&-\mathcal{T}_8^\e(\l_k^{(\e,N)})f_k^{(\e,N)}, \mathcal{P}\big(\phi_j^{(\e,N)}-\mathcal{T}_8^\e(\l_j^{(\e,N)}) f_j^{(\e,N)}\big)\Big)_{L_2(\Om^\e)}
\\
&=\sum\limits_{q=1}^{m} \frac{\big(f_k^{(\e,N)},\mathcal{P}\psi_q^\e\big)_{L_2(\Om^\e)}} {\l_q^\e-\l_k^{(\e,N)}}
\overline{\left(\frac{\big(f_j^{(\e,N)},\mathcal{P}\psi_q^\e\big)_{L_2(\Om^\e)}} {\l_q^\e-\l_j^{(\e,N)}}
\right)}
\end{aligned}
\label{7.3}
\\
&\big(\phi_k^{(\e,N)}-\mathcal{T}_8^\e(\l_k^{(\e,N)})f_k^{(\e,N)}, \mathcal{P}\psi_j^\e\big)_{L_2(\Om^\e)}= \frac{\big(f_k^{(\e,N)},\mathcal{P}\psi_j^\e\big)_{L_2(\Om^\e)}} {\l_j^\e-\l_k^{(\e,N)}}.\label{7.1}
\end{align}
Lemmata~\ref{lm6.3},~\ref{lm7.2} yield the convergence
\begin{equation*}
\|\mathcal{T}_8^\e(\l_k^{(\e,N)})f_k^{(\e,N)}\|_{L_2(\Om^\e)}\to 0 \quad\text{as}\quad \e\to+0,
\end{equation*}
and estimates  (\ref{6.26}) follow the relations
\begin{align}
&\big(\phi_k^{(\e,N)},\mathcal{P}\phi_j^{(\e,N)}\big)_{L_2(\Om^\e)}=\d_{kj}+o(1),\quad \e\to+0,\nonumber
\\
&|F_{kj}^\e|\leqslant C,\quad F_{kj}^\e:=\frac{\big(f_k^{(\e,N)},\mathcal{P}\psi_j^\e\big)_{L_2(\Om^\e)}} {\l_j^\e-\l_k^{(\e,N)}},
\label{7.11}
\end{align}
where constant $C$ is independent of $\e$, $k$, $j$. It follows that the determinant of the matrix formed by the left hand sides of identities (\ref{7.3}) tends to one as  $\e\to+0$. On the other hand, the matrix formed by the right hand sides of identities (\ref{7.3}) can be represented as the product $\mathrm{F}^\e(\mathrm{F}^\e)^*$, where $\mathrm{F}^\e$ is the matrix with entries $F_{kj}^\e$. We thus get
\begin{equation*}
|\det \mathrm{F}^\e|\to 1\quad \text{при}\quad \e\to+0.
\end{equation*}
Therefore, for each sufficiently small $\e$ there exists permutation $q_1,\ldots,q_m$ such that
\begin{equation*}
\left|\prod\limits_{k=1}^{m} F_{kq_k}^\e\right|\geqslant \frac{1}{m!}.
\end{equation*}
By (\ref{7.11}) it implies
\begin{equation*}
|F_{kq_k}^\e|\geqslant \frac{1}{C^{m-1}m!}.
\end{equation*}
Substituting here the definition of $F_{kq_k}^\e$ in (\ref{7.11}) and employing estimates (\ref{6.27}), we arrive at the identities
\begin{equation*}
\l_{kq_k}^\e-\l_k^{(\e,N)}= \Odr(\e^{N-1}),
\end{equation*}
which prove asymptotics (\ref{1.22}) for the perturbed eigenvalues after an appropriate re-ordering.

We proceed to the justification of the asymptotics for the eigenfunctions. Suppose that condition (\ref{1.23}) is satisfied. Then it follows from asymptotics (\ref{1.22}) that
\begin{equation*}
|\l_j^\e-\l_k^{(\e,N)}|\geqslant C\e^r
\end{equation*}
as $N>r$, and hence by (\ref{6.27})
\begin{equation*}
|F_{kj}^\e|\leqslant C\e^{N-r-1},
\end{equation*}
where $C$ is a constant independent of $\e$, $k$, $j$. We substitute the latter estimates and (\ref{6.27}) into identity (\ref{7.10}) and move term $F_{kk}^\e\psi_k^\e$ into the left hand side. Then we obtain
\begin{equation*}
\|\phi_k^{(\e,N)}-F_{kk}^\e\psi_k^\e\|_{\H^1(\Om^\e)}\leqslant C\e^{N-r-1}.
\end{equation*}
Since $F_{kk}^\e$ is a number, $F_{kk}^\e\psi_k^\e$ is an eigenfunction associated with $\l_k^\e$. This is why the latter estimate proves asymptotics (\ref{1.24}) for the perturbed eigenfunctions. The proof of Theorem~\ref{th2.4} is complete.

\medskip

The author was partially supported by grant of RFBR, grant of President of Russia for young scientists-doctors of sciences (MD-183.2014.1) and Dynasty fellowship for young Russian mathematicians.

\bigskip

\end{document}